\date{\today}
\author[M. Bodirsky]
{Manuel Bodirsky}
\address{Institut f\"{u}r Algebra\\TU Dresden\\01062 Dresden\\Germany}
    \email{Manuel.Bodirsky@tu-dresden.de}
   \urladdr{http://www.math.tu-dresden.de/~bodirsky/}
    \thanks{The first author has received funding from the European Research Council under the European Community's Seventh Framework Programme (FP7/2007-2013 Grant Agreement no. 257039). The third author has been funded through project P27600 of the Austrian Science Fund (FWF). The fourth author has been funded through projects I836-N23 and P27600 of the Austrian Science Fund (FWF)}
\author[D. Evans]
{David Evans}
\address{Department of Mathematics\\  Huxley Building, South Kensington Campus \\ Imperial College London \\ United Kingdom}
\email{david.evans@imperial.ac.uk}
\urladdr{http://wwwf.imperial.ac.uk/~dmevans/}
\author[M. Kompatscher]
{Michael Kompatscher}
    \address{Institut f\"{u}r Computersprachen\\Theory and Logic Group\\Technische Universit\"{a}t Wien\\Favoritenstrasse 9/E1852\\A-1040 Wien\\
Austria}
\email{michael@logic.at}
\urladdr{https://www.logic.at/staff/kompatscher/}
\author[M. Pinsker]
{Michael Pinsker}
	\address{Department of Algebra, MFF UK, Sokolovska 83, 186 00 Praha 8, Czech Republic}    
    \email{marula@gmx.at}
    \urladdr{http://dmg.tuwien.ac.at/pinsker/}
\title[A counterexample to reconstruction from monoids]{A counterexample to the reconstruction\\ of $\omega$-categorical structures\\ from their endomorphism monoid}
\theoremstyle{plain}
    \newtheorem{thm}{Theorem}[section]
    \newtheorem{theorem}[thm]{Theorem}
    \newtheorem{lem}[thm]{Lemma}
    \newtheorem{lemma}[thm]{Lemma}
    \newtheorem{prop}[thm]{Proposition}
    \newtheorem{proposition}[thm]{Proposition}
    \newtheorem{cor}[thm]{Corollary}
    \newtheorem{quest}[thm]{Question}
\theoremstyle{definition}
    \newtheorem{nota}[thm]{Notation}
\newcommand{\Ac}{\mathcal A}
\newcommand{\cC}{\mathcal C}
\DeclareMathOperator{\Sym}{Sym}
\newcommand{\rest}{|}
\newcommand{\cA}{\ensuremath{\mathcal{A}}}
\newcommand{\cB}{\ensuremath{\mathcal{B}}}
\newcommand{\cM}{\ensuremath{\mathcal{M}}}
\newcommand{\ignore}[1]{}
\newcommand{\To}{\rightarrow}
\newcommand{\mult}{\times}
\DeclareMathOperator{\Aut}{Aut}
\DeclareMathOperator{\End}{End}
\DeclareMathOperator{\Pol}{Pol}
\newcommand{\A}{{\bf A}}
\newcommand{\G}{{\bf G}}
\renewcommand{\H}{\mathbf H}
\newcommand{\F}{\mathbf F}
\newcommand{\R}{\mathbf R}
\newcommand{\M}{\mathbf M}
\begin{document}
\begin{abstract}
We present an example of two countable $\omega$-categorical structures, one of which has a finite relational language, whose endomorphism monoids are isomorphic as abstract monoids, but not as topological monoids -- in other words, no isomorphism between these monoids is a homeomorphism. For the same two structures, the automorphism groups and polymorphism clones are isomorphic, but not topologically isomorphic. In particular, there exists a countable $\omega$-categorical structure in a finite relational language which can neither be reconstructed up to first-order bi-interpretations from its automorphism group, nor up to existential positive bi-interpretations from its endomorphism monoid, nor up to primitive positive bi-interpretations from its polymorphism clone.
\end{abstract}

\maketitle

\section{Introduction and the Result}
\label{sect:intro}
How much information about a structure $\mathcal A$ is 
coded into its automorphisms group $\Aut(\mathcal A)$?
Classical model theory provides 
a strong form of reconstruction of ${\mathcal A}$ 
from $\Aut({\mathcal A})$ 
when $\Aut({\mathcal A})$ is big
in the sense that it has for all $k \geq 1$
only finitely many orbits in the componentwise
action on $k$-tuples of elements of ${\mathcal A}$;
 such permutation groups are called 
\emph{oligomorphic}. By the theorem of
Ryll-Nardzewski, the automorphism group 
of a countable structure ${\mathcal A}$ is oligomorphic if and
only if ${\mathcal A}$ is \emph{$\omega$-categorical}, that is, all countable 
models of the first-order theory of ${\mathcal A}$ are isomorphic. 
The classical reconstruction result for an $\omega$-categorical structure ${\mathcal A}$ states that 
when $\Aut({\mathcal A})$ is 
equal to $\Aut({\mathcal B})$ as a permutation group for some structure ${\mathcal B}$, then ${\mathcal B}$ has a first-order definition in ${\mathcal A}$, and vice versa: the two structures are \emph{first-order
interdefinable}. The assumption that
${\mathcal A}$ is $\omega$-categorical is in some sense best possible for
this type of reconstruction: it can be seen that
when ${\mathcal A}$ has a countable signature, 
then the above reconstruction statement holds if and only if ${\mathcal A}$ is $\omega$-categorical. 

The situation is more complicated
when we only know that 
$\Aut({\mathcal A})$ and $\Aut({\mathcal B})$ are isomorphic as groups. To approach this
question, it is essential to first examine 
$\Aut({\mathcal A})$ and $\Aut({\mathcal B})$
as topological groups, equipped with
the topology of pointwise convergence. 
With this topology, automorphism groups
of countable structures are precisely the closed
subgroups of the full symmetric group $\Sym(\omega)$ on $\omega$.  
A result due to Coquand (see~\cite{AhlbrandtZiegler}) says that when $\Aut({\mathcal A})$
and $\Aut({\mathcal B})$ are isomorphic as 
topological groups (that is, via an isomorphism
that is also a homeomorphism),
then ${\mathcal A}$ and ${\mathcal B}$ are
first-order bi-interpretable. We do not require the notions of interpretability and bi-interpretability here, and refer to~\cite{AhlbrandtZiegler} for details, but mention
that these notions are central in model theory since most model-theoretic concepts
are stable under bi-interpretability. Hence, we focus on  
a subproblem:
is it true that when $\Aut({\mathcal A})$ and
$\Aut({\mathcal B})$ are isomorphic as groups, then they are also isomorphic as topological groups?

Rather surprisingly, isomorphisms between automorphism groups of countable structures are typically homeomorphisms. 
And in fact, it is consistent with ZF + DC that
\emph{all} homomorphisms between 
closed subgroups of $\Sym(\omega)$ are continuous, and that all isomorphisms
between closed subgroups of $\Sym(\omega)$
are homeomorphisms; see the end of Section~\ref{sect:profinite} for more explanation. 
%\footnote{As pointed out by Todor Tsankov, ZF+DC is consistent with the statement that every isomorphism between Polish groups is a homeomorphism (cf.~the discussion in~\cite{Topo-Birk}): it is well-known that every Baire measurable homomorphism between Polish groups is continuous (see e.g.~\cite{Kechris}). Further the statement that every set is Baire measurable is consistent with ZF+DC (\cite{Shelah84}). Thus the existence of two separable profinite groups (respectively two closed oligomorphic groups) that are isomorphic, but not topologically isomorphic, cannot be proven in ZF+DC.}. 
Using the existence of non-principal ultrafilters on $\omega$, it is relatively easy to show that there are oligomorphic permutation groups
with non-continuous homomorphisms
to ${\mathbb Z}_2$. 
But it was open for a while whether for countable $\omega$-categorical structures $\mathcal A$ and $\mathcal B$ 
the existence of an isomorphism between
$\Aut({\mathcal A})$ and $\Aut({\mathcal B})$ 
implies the existence of an isomorphism
which is additionally a homeomorphism.
This problem was solved by the second author and Hewitt~\cite{EvansHewitt}, by giving two structures $\mathcal A$ and $\mathcal B$ for which the answer was negative.
%$\omega$-categorical structures ${\mathcal A}$ and ${\mathcal B}$ such that $\Aut({\mathcal A})$ and $\Aut({\mathcal B})$ are isomorphic, but not via a homeomorphism. 

Natural objects that carry more information
about a structure 
${\mathcal A}$ than $\Aut({\mathcal A})$ are its
\emph{endomorphism monoid} $\End({\mathcal A})$, which consists
of the set of homomorphisms from ${\mathcal A}$ to ${\mathcal A}$,
or, even more generally,
its \emph{polymorphism clone} $\Pol({\mathcal A})$, which consists of
the set of all homomorphisms from ${\mathcal A}^k$ to ${\mathcal A}$,
for all $k \geq 1$. We are going to show the following theorem related to results of Lascar~\cite{Lascar-demigroupe}; see also the discussion in Section~\ref{sect:open}.

\begin{theorem}\label{thm:main}
There are countable $\omega$-categorical structures
$\mathcal A$, $\mathcal B$ such that $\End(\mathcal A)$
and $\End(\mathcal B)$ are isomorphic, but not topologically isomorphic.
\end{theorem}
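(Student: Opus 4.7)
The strategy is to reduce to the analogous result of Evans and Hewitt~\cite{EvansHewitt}, which provides countable $\omega$-categorical structures $\mathcal{A}_0, \mathcal{B}_0$ admitting an abstract group isomorphism $\Aut(\mathcal{A}_0) \to \Aut(\mathcal{B}_0)$ that is not a homeomorphism. The aim is to transport this phenomenon from automorphism groups to endomorphism monoids by arranging that, for the structures we produce, endomorphisms coincide with automorphisms. Indeed, if $\mathcal{A}, \mathcal{B}$ are $\omega$-categorical with the same underlying sets as $\mathcal{A}_0, \mathcal{B}_0$ respectively, with $\Aut(\mathcal{A}) = \Aut(\mathcal{A}_0)$, $\Aut(\mathcal{B}) = \Aut(\mathcal{B}_0)$, and $\End(\mathcal{A}) = \Aut(\mathcal{A})$, $\End(\mathcal{B}) = \Aut(\mathcal{B})$, then the topology of pointwise convergence on $\End$ as a subspace of $A^A$ coincides with the topology of pointwise convergence on $\Aut$. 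Consequently, a (topological) monoid isomorphism of the $\End$'s is nothing other than a (topological) group isomorphism of the $\Aut$'s, and the Evans-Hewitt statement yields an abstract but not topological isomorphism of the endomorphism monoids.

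To build such $\mathcal{A}, \mathcal{B}$, the natural first attempt is to expand $\mathcal{A}_0, \mathcal{B}_0$ by first-order definable relations, which automatically preserves $\omega$-categoricity and the automorphism groups. Adding the relation $\neq$ forces endomorphisms to be injective; to additionally force surjectivity, one adds further relations — for example, relations naming each $\Aut(\mathcal{A}_0)$-orbit on tuples — so that any injective endomorphism must respect the orbit structure strongly enough to be a bijection. If no direct expansion of the Evans-Hewitt examples suffices, the fallback is to re-run the construction behind Evans-Hewitt with additional rigidity built in, so that the resulting structures satisfy $\End = \Aut$ by design while still carrying a discontinuous abstract isomorphism between their automorphism groups.

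The main obstacle is precisely the surjectivity step. Elementary self-embeddings of $\omega$-categorical structures need not be surjective in general, as exemplified by $(\mathbb{Q};<)$, so surjectivity has to be forced by the language. This will require a careful analysis of the Evans-Hewitt construction, identifying a family of definable relations that rules out non-surjective injective endomorphisms, or else a tailored variant of the construction with the right rigidity designed in from the start. Once this is achieved, the theorem follows immediately from the topological reduction of the first paragraph: the Evans-Hewitt isomorphism is a monoid isomorphism of the $\End$'s, and any continuous such isomorphism would restrict to a topological group isomorphism of the $\Aut$'s, contradicting Evans-Hewitt.
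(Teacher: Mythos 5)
Your plan hinges on producing $\omega$-categorical structures with $\End(\mathcal A)=\Aut(\mathcal A)$, and this is impossible: for \emph{every} countably infinite $\omega$-categorical structure $\mathcal A$, the endomorphism monoid properly contains the automorphism group, no matter which first-order definable relations you add. Indeed, $\End(\mathcal A)$ is always a closed subset of $A^A$ containing $\Aut(\mathcal A)$, hence contains the closure $\overline{\Aut(\mathcal A)}$ in $A^A$, whose elements are exactly the elementary self-embeddings (maps preserving all orbits on tuples). And every countably infinite $\omega$-categorical $\mathcal A$ admits a non-surjective elementary self-embedding: the type $\{x\neq a : a\in A\}$ is realized in a countable proper elementary extension $\mathcal A'$, which by $\omega$-categoricity is isomorphic to $\mathcal A$, and composing gives a proper self-embedding. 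So no amount of added definable relations, and no ``rigidity built in from the start,'' can force surjectivity; the best one can achieve is $\End(\mathcal A)=\overline{\Aut(\mathcal A)}$, the monoid of elementary self-embeddings. Your fallback therefore fails for the same reason as your first attempt.

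This is precisely why the paper cannot take the shortcut you propose, and why the real work lies elsewhere. The paper takes $\End(\mathcal A)=\overline{\Aut(\mathcal A_0)}$ and $\End(\mathcal B)=\overline{\Aut(\mathcal B_0)}$ for (a variant of) the Evans--Hewitt groups; the ``not topologically isomorphic'' half is then indeed the easy direction, obtained exactly as you say by restricting to the groups of invertible elements. The hard half, which your proposal does not address at all, is showing that the two monoids \emph{are} abstractly isomorphic: the Evans--Hewitt isomorphism between the groups is discontinuous, so there is no general reason it should extend to their closures in $A^A$ (a discontinuous map need not respect limits). The paper overcomes this only by exploiting the specific algebraic structure of its groups -- it arranges one group as $\Gamma\cong\Sigma_{\G/\F}\times\F$ and proves, via an explicit decomposition of each element of $\overline{\Gamma}$ (Lemma~\ref{lem:lifting_monoids2}), that the closure is $\overline{\Sigma_{\G/\F}}\times\F$ as a monoid, so that the isomorphism of closures can be exhibited directly rather than extended from the group level. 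Without an argument of this kind, your outline does not yield the theorem.
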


In fact, the two endomorphism monoids of the structures $\mathcal A$
and $\mathcal B$ will be the closures in $\omega^\omega$ of the two
automorphism groups which are isomorphic, but not topologically
isomorphic, presented in~\cite{EvansHewitt}.
Ironically, it is its non-continuity which makes the extension of the
isomorphism between those groups to their closures non-trivial, giving
rise to the present work.

It has been asked in~\cite{Reconstruction} whether there are
$\omega$-categorical
structures 
whose polymorphism clones are isomorphic,
but not topologically. Theorem~\ref{thm:main} immediately implies a
positive answer to this question: any two structures whose
polymorphism clones consist essentially (that is, up to adding of
dummy variables)  of the functions in $\End(\mathcal A)$
and $\End(\mathcal B)$, respectively, are examples.

\begin{cor}\label{cor:main}
There are countable $\omega$-categorical structures
$\mathcal A$, $\mathcal B$ such that $\Pol(\mathcal A)$
and $\Pol(\mathcal B)$ are isomorphic, but not topologically isomorphic.
\end{cor}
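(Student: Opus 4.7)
The hint in the paragraph preceding the corollary makes the approach clear: starting from the structures $\mathcal A$, $\mathcal B$ produced by Theorem~\ref{thm:main}, the plan is to exhibit $\omega$-categorical structures $\mathcal A'$, $\mathcal B'$ whose polymorphism clones are \emph{essentially unary} with unary parts equal to $\End(\mathcal A)$ and $\End(\mathcal B)$ respectively. That is, every $f\in\Pol(\mathcal A')$ should factor as $f(x_1,\dots,x_n)=e(x_i)$ for some $e\in\End(\mathcal A)$ and some index $i\in\{1,\dots,n\}$, and analogously for $\mathcal B'$; these structures will then serve as the witnesses required by the corollary.

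To construct $\mathcal A'$ I would expand $\mathcal A$ on the same domain by sufficiently many $\End(\mathcal A)$-invariant relations to eliminate every non-essentially-unary polymorphism, without enlarging the endomorphism monoid. One concrete recipe is to add a single $n$-ary relation given by a well-chosen orbit of $\Aut(\mathcal A)$ on tuples whose entries lie in pairwise distinct orbits: oligomorphicity of $\Aut(\mathcal A)$ together with the closedness of $\End(\mathcal A)$ in $A^A$ provides enough room, and $\omega$-categoricity of the expansion follows because any $\Aut(\mathcal A)$-orbit on $A^n$ is first-order definable in the $\omega$-categorical $\mathcal A$. Since the new relations are $\End(\mathcal A)$-invariant, $\End(\mathcal A')=\End(\mathcal A)$. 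An analogous construction on $\mathcal B$ gives $\mathcal B'$ with $\End(\mathcal B')=\End(\mathcal B)$.

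Once essential unariness is established, the passage from monoids to clones is formal: an abstract monoid isomorphism $\Phi\colon\End(\mathcal A)\to\End(\mathcal B)$ lifts canonically to an abstract clone isomorphism $\widetilde\Phi\colon\Pol(\mathcal A')\to\Pol(\mathcal B')$ via $e\circ\pi_i^n\mapsto\Phi(e)\circ\pi_i^n$. Composition, permutation and identification of variables in an essentially unary clone are controlled entirely by the underlying monoid together with the bookkeeping of the essential-coordinate index, and under pointwise convergence a net of essentially unary operations converges if and only if the index eventually stabilises and the unary parts converge in $A^A$. Hence $\widetilde\Phi$ is continuous (a homeomorphism) exactly when $\Phi$ is, and Theorem~\ref{thm:main} provides an abstract but non-topological $\Phi$. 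The main obstacle is the first step, namely verifying that a suitable expansion genuinely renders the polymorphism clone essentially unary for the specific Evans--Hewitt-style structures underlying Theorem~\ref{thm:main}; this is an orbit-level check rather than a conceptual difficulty, after which the rest of the argument is mechanical.
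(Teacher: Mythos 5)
Your overall route is the paper's: realise $\End(\mathcal A)$ and $\End(\mathcal B)$ as the unary parts of essentially unary polymorphism clones, lift the monoid isomorphism formally, and observe that a topological clone isomorphism would restrict to a topological monoid isomorphism on the unary sort, contradicting Theorem~\ref{thm:main}. However, you leave the one non-formal step --- the existence of $\omega$-categorical structures $\mathcal A'$, $\mathcal B'$ with $\Pol(\mathcal A')$ essentially unary and $\End(\mathcal A')=\End(\mathcal A)$ --- explicitly unresolved, and the recipe you sketch for it is not convincing. A single relation consisting of one $\Aut(\mathcal A)$-orbit of tuples will not in general exclude all essentially binary polymorphisms, and nothing in your proposal explains why it would here; you yourself defer this to ``an orbit-level check'' that you do not perform, and no check specific to the Evans--Hewitt structures is needed. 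The paper closes this step with a general fact recorded in its preliminaries: the function clone generated by a closed transformation monoid (the monoid's elements with dummy variables added) is itself topologically closed, and closed function clones are exactly the polymorphism clones of structures on that domain. So one may take $\mathcal A'$ to be any structure with $\Pol(\mathcal A')$ equal to the clone generated by $\End(\mathcal A)$; its unary part is then exactly $\End(\mathcal A)$, and $\omega$-categoricity follows because this clone contains the oligomorphic group $\Aut(\mathcal A)$. Alternatively (the device the paper uses for the finite-language version), expand $\mathcal A$ by the single $4$-ary relation $R(x,y,a,b)\leftrightarrow(x=y\lor a=b)$: this relation is preserved by \emph{every} unary map, so the endomorphism monoid is unchanged, and a standard argument shows that any operation preserving it has at most one essential variable.

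Two smaller points. The lifting $e\circ\pi^n_i\mapsto\Phi(e)\circ\pi^n_i$ is only well defined if $\Phi$ and $\Phi^{-1}$ send constant maps to constant maps (otherwise $e\circ\pi^2_1=e\circ\pi^2_2$ for constant $e$ while the proposed images differ); this is precisely the hypothesis of the paper's Proposition~\ref{prop:extending2clones}, and it holds vacuously here because the monoids consist of injections of an infinite set, but your ``bookkeeping of the essential-coordinate index'' silently assumes it. And for the negative topological conclusion you do not need your characterisation of convergence of essentially unary operations (which fails when limits are constant): it suffices that any topological clone isomorphism restricts to a homeomorphic isomorphism of the unary sorts, i.e.\ of $\End(\mathcal A)$ and $\End(\mathcal B)$, which is already excluded by Theorem~\ref{thm:main}.
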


The construction in \cite{EvansHewitt}
is based on a representation of 
profinite groups as quotients of
oligomorphic groups, due to Hrushovski, and on a non-reconstruction result for profinite groups which uses the
axiom of choice. The non-reconstruction lifts to the oligomorphic groups representing the profinite groups. 

In the present paper we show that it lifts further to the closures of the oligomorphic groups.  
The method of embedding profinite groups into quotients of oligomorphic structures is quite powerful and might 
be useful in different contexts as well. 

The structures constructed in our proof of Theorem~\ref{thm:main} have an infinite relational language. We use a well-known construction due to Hrushovski to encode countable $\omega$-categorical structures into structures with a finite relational language, and show that this encoding is compatible with our examples, roughly because the encoding preserves model-completeness. That way, we obtain the following main theorem of the present article.

\begin{thm}\label{thm:finitelanguage}
There exists a countable $\omega$-categorical structure
$\mathcal A$ in a finite relational language such that none of $\Aut({\mathcal A})$, $\End({\mathcal A})$, and $\Pol({\mathcal A})$ have \emph{reconstruction} (cf.~\cite{Reconstruction}): that is, there exists a countable $\omega$-categorical structure
$\mathcal B$ such that $\Aut({\mathcal A})$ 
 and $\Aut({\mathcal B})$, $\End({\mathcal A})$ and $\End({\mathcal B})$, and $\Pol({\mathcal A})$ and $\Pol({\mathcal B})$ are isomorphic, but not topologically isomorphic.
\end{thm}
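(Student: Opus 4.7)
The plan is to take witnesses $\mathcal{A}_0$ and $\mathcal{B}_0$ of Theorem~\ref{thm:main}, to apply Hrushovski's well-known encoding of countable $\omega$-categorical structures as structures in a finite relational language to $\mathcal{A}_0$ in order to produce a countable $\omega$-categorical structure $\mathcal{A}$ in a finite relational language, and to set $\mathcal{B} := \mathcal{B}_0$. By the paragraph following Theorem~\ref{thm:main}, both $\End(\mathcal{A}_0)$ and $\End(\mathcal{B}_0)$ are the closures in $\omega^\omega$ of the corresponding automorphism groups, which is the standard characterisation of model-completeness for countable $\omega$-categorical structures; moreover, the polymorphism clones are, up to addition of dummy variables, given by the endomorphism monoids.

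The Hrushovski construction yields a bi-interpretation between $\mathcal{A}_0$ and $\mathcal{A}$, and hence in particular a topological isomorphism $\Aut(\mathcal{A}) \cong \Aut(\mathcal{A}_0)$. The crux of the proof is to verify that $\mathcal{A}$ inherits model-completeness from $\mathcal{A}_0$. Once this is established, $\End(\mathcal{A})$ equals the closure of $\Aut(\mathcal{A})$ in the space of self-maps of the universe, and the topological isomorphism of automorphism groups lifts to a topological isomorphism $\End(\mathcal{A}) \cong \End(\mathcal{A}_0)$, since the bi-interpretation identifies the pointwise-convergence topologies on the two function spaces through the canonical correspondence of orbits.

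Composing with the abstract monoid isomorphism $\End(\mathcal{A}_0) \cong \End(\mathcal{B}_0)$ provided by Theorem~\ref{thm:main} then produces an abstract monoid isomorphism $\End(\mathcal{A}) \cong \End(\mathcal{B})$; any topological isomorphism between them would transport back to a topological isomorphism $\End(\mathcal{A}_0) \cong \End(\mathcal{B}_0)$, contradicting Theorem~\ref{thm:main}. The statement for $\Aut$ follows immediately from the topological group isomorphism coming from the bi-interpretation. For $\Pol$, one argues that the bi-interpretation also carries through the observation about clones: all polymorphisms of $\mathcal{A}$ remain essentially unary, so the topology of $\Pol(\mathcal{A})$ is entirely controlled by that of $\End(\mathcal{A})$, and the argument for $\End$ applies verbatim to conclude that Corollary~\ref{cor:main} extends to the encoded structure.

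The main obstacle is showing that Hrushovski's encoding preserves model-completeness. This amounts to verifying that each atomic relation of the encoded structure $\mathcal{A}$ is existentially definable in $\mathcal{A}$ itself, for which one combines the explicit form of the encoding with the fact that, by model-completeness of $\mathcal{A}_0$, every first-order formula over $\mathcal{A}_0$ is already equivalent to an existential one.
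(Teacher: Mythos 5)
Your proposal hinges on the claim that Hrushovski's encoding yields a bi-interpretation between $\mathcal A_0$ and its finite-language encoding $\mathcal A$, hence a topological isomorphism $\Aut(\mathcal A)\cong_T\Aut(\mathcal A_0)$, after which everything would transfer. This is exactly the step that fails, in two ways. First, the encoding of Proposition~\ref{prop:finite_1} is not known to give a bi-interpretation: the restriction of $\Aut(\cB)$ to the predicate $P$ has the same orbits as $\Aut(\cA)$ on tuples from $P$, but it may be a \emph{proper dense} subgroup of $\Aut(\cA)$ rather than all of it, so one does not obtain a topological group isomorphism. Second, and decisively for your specific plan, no finite-language structure can have automorphism group topologically isomorphic to $\Aut(\mathcal A_0)$ at all: by the theorem of Coquand--Ahlbrandt--Ziegler such an isomorphism would force a bi-interpretation, and the witness of Theorem~\ref{thm:main} is not bi-interpretable with any finite-signature structure. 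So the object your first paragraph posits does not exist, and the strategy of ``transport the counterexample along a topological isomorphism'' cannot be repaired by a better verification of model-completeness.

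The correct route is to apply the encoding one level further down: encode the canonical structure of $\Sigma_{\G/\F}$ (not the final counterexample) into a finite-language structure $\cB$, set $\tilde\Sigma=\Aut(\cB)$, and then \emph{re-prove} for $\tilde\Sigma$ the properties that made the construction work, namely that there is a continuous surjection $\tilde\mu\colon\tilde\Sigma\to\G/\F$ whose kernel $\tilde\Phi$ is the intersection of the open finite-index subgroups of $\tilde\Sigma$ (Lemma~\ref{lem:finite_2}); this requires a genuinely new combinatorial argument with the $n$-pairs of the encoding, not a transfer. One then reruns the whole lifting machinery of Sections~\ref{sect:lift1} and~\ref{sect:monoids} with $\tilde\Sigma$ in place of $\Sigma_{\G/\F}$. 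Your observation that one must check a model-completeness property of the encoded structure is correct and does appear (one shows $\End(\cB)=\overline{\Aut(\cB)}$ by expressing each $R_n$ existentially positively via $n$-pairs), but it is used only at the very end to identify $\End(\cC)$ with $\overline{\tilde\Sigma}\times\F$; it is not the main obstacle, and it does not rescue the transfer argument.
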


\section{Preliminaries}
\label{sect:prelims}

A \emph{topological monoid} $\M = (M, \cdot,1)$ is a monoid together with a topology on $M$ such that the multiplication $\cdot\colon M^2 \to M$ is a continuous function. A \emph{topological group} $\G = (G, \cdot, ^{-1},1)$ is a group such that $(G, \cdot,1)$ is a topological monoid and also $^{-1}\colon G \to G$ is continuous.

Every permutation group $\Sigma$ on a set $X$ (and, likewise, every transformation monoid $\Lambda$ on $X$) gives rise to a topological group (a topological monoid) as follows. 
We equip $X$ with the discrete topology,
and $X^X$ with the product topology. 
Then composition of transformations in $\Lambda$,
and composition and taking the inverse of permutations in $\Sigma$ are continuous with respect to the subspace topology inherited from $X^X$.  
We write $\Sym(X)$ for the 
set of all permutations of the set $X$. 
If 
$\Sigma$ is a permutation group on a set $X$ and $A \subset X$, the (pointwise) stabilizer of $A$ in $\Sigma$ is denoted by 
$\Sigma_{(A)}$.

A transformation monoid is closed in $X^X$ if and only
if it is the endomorphism monoid of a relational structure.
Likewise, a permutation group is closed in $\Sym(X)$ if and only if 
it is the automorphism group of a structure with domain
$X$.
%, and a transformation monoid is closed in $X^X$
%if and only if it is the endomorphism monoid of a countable structure with domain $X$. 
The topological groups that arise in this way as automorphism groups of countable structures 
are precisely those Polish groups that have a compatible
left-invariant ultrametric~\cite{BeckerKechris}. 

For a subgroup $\H$ of $\G$ we write $\H \leq \G$,
and we write $g \H := \{g h : h \in H\}$ for the (left-) coset of $\H$ in $\G$ containing $g$. We denote by $\G / \H$ the set 
of all cosets of $\H$ in $\G$. 
If $\H$ is a normal subgroup of $\G$ then 
$\G / \H$ carries a natural group structure 
%given by
%$g_1 \H \cdot g_2 \H := (g_1 \cdot g_2) \H$,
%$(g \H)^{-1} := g^{-1} \H$ bla
which is a topological group with respect to the quotient topology. We write $\G \cong \H$ if $\G$ and $\H$
are isomorphic as groups, and $\G \cong_T \H$
if $\G$ and $\H$ are \emph{topologically isomorphic},
that is, there exists an isomorphism which is also a
 homeomorphism. 
%$\big \{ \{g h : h \in \H\} : g \in \G \big \}$ 
%of all (left-) cosets of $\H$ in $\G$. 
%Permutation groups $\Sigma$\\
%(Normal?) subgroup $\leq$\\
%Factor group $\G / \H$\\
%Abstract isomorphism $\cong$\\
%Topological isomorphism $\cong_T$\\
%Acts $\G\curvearrowright X$ - Notwendig? NEIN \\
%The permutation group on $X$, induced by the group/monoid $\G$, acting on $X$: $\G^X$\\
When forming direct products
$\G \times \H$
of topological groups $\G$ and $\H$, then
the group $\G \times \H$ is equipped with the product topology of $\G$ and $\H$.

For background on profinite
groups, we refer to the text book of Ribes and Zalesskii~\cite{RibesZalesskii}. 

\emph{Function clones} are the multivariate generalisation 
of transformation monoids. For a fixed set $X$, the
largest function clone on $X$ is the set $O(X) := \bigcup_{k \geq 1} X^{X^k}$, and a function clone (on $X$)
is a subset of $O(X)$ (called the \emph{operations}) that contains all the projection maps 
%$p_i^k$
%defined by $p_i^k(x_1,\dots,x_k) \mapsto x_i$
and that is closed under composition. 
Each set $X^{X^k}$ is equipped with the product topology (again, $X$ is taken to be discrete), 
and $O(X)$ then carries the sum topology. With respect to this topology,
composition of operations is continuous, and the clones that are closed
subsets of $O(X)$ are precisely the polymorphism clones
of structures with domain $X$. 
%Such structures can for example be obtained from
%$\mathcal A$ and $\mathcal B$ by adding the relation
%$\{(x,y,u,v) \; | \; x=y \vee u=v\}$.  

A \emph{clone homomorphism} from a function clone $\Gamma$ to a function clone $\Delta$ 
 is a map $\xi$ from the operations
of $\Gamma$ to the operations of $\Delta$ 
such that for all $f,g_1,\dots,g_n \in \Gamma$
we have $\xi(f(g_1,\dots,g_n)) = \xi(f)(\xi(g_1),\dots,\xi(g_n))$. 
A clone isomorphism is a bijective clone homomorphism.
We refer the reader to~\cite{Reconstruction} for a more thorough treatment of function clones
and topological clones.

\section{The Proof}
\label{sect:proof}

\subsection{Overview}
The idea is to obtain the results in the following steps.
\begin{itemize}
\item[(1)] There exist separable profinite groups $\G$ and $\G'$ which are abstractly but not topologically isomorphic: $\G\cong\G'$ but $\G\ncong_T\G'$.
\item[(2)] There is a oligomorphic permutation group $\Phi$ on a countable set such that for every separable profinite group $\R$ there exists a closed permutation group $\Sigma_\R \geq \Phi$ such that $\R\cong_T \Sigma_\R / \Phi$.
Furthermore $\Phi$ can be characterized in the topological group structure of $\Sigma_\R$ as the intersection of the open normal subgroups of finite index.
\end{itemize}
It would then be natural to continue by the following steps. However, we do not know whether (3) is true, so the argument will proceed in a less direct way, but still following the outline below.
\begin{itemize}
\item[(3)] For the separable profinite groups $\G$ and $\G'$ from (1), the permutation groups $\Sigma_\G$ and $\Sigma_{\G'}$ are isomorphic.
\item[(4)] $\Sigma_\G$ and $\Sigma_{\G'}$ cannot be topologically isomorphic, since by (2) any topological isomorphism would have to send $\Phi$ onto itself, and so $\Sigma_{\G}/ \Phi$ and $\Sigma_{\G'}/ \Phi$ would be topologically isomorphic, contradicting (1).
\item[(5)] The isomorphism between the permutation groups $\Sigma_\G$ and $\Sigma_{\G'}$ extends to their topological closures $\overline{\Sigma_\G}$ and $\overline{\Sigma_{\G'}}$ in $\omega^\omega$. However, the closed monoids $\overline{\Sigma_\G}$ and $\overline{\Sigma_{\G'}}$ are not topologically isomorphic: otherwise we would obtain a topological isomorphism between $\Sigma_\G$ and $\Sigma_{\G'}$ by restricting any topological isomorphism between $\overline{\Sigma_\G}$ and $\overline{\Sigma_{\G'}}$, contradicting~(4).
\item[(6)] The closed oligomorphic function clones containing precisely the essentially unary functions obtained from $\overline{\Sigma_\G}$ and $\overline{\Sigma_{\G'}}$ are isomorphic by extending the isomorphism between $\overline{\Sigma_\G}$ and $\overline{\Sigma_{\G'}}$ naturally. However, they are not topologically isomorphic as otherwise $\overline{\Sigma_\G}$ and $\overline{\Sigma_{\G'}}$ would be topologically isomorphic as well by restricting any topological isomorphism between the functions clones to their unary sort.
\item[(7)] $\Sigma_\G$ can be encoded in a structure in a finite language such that the above arguments still work.
\end{itemize}

We remark that the steps (1)-(3) have already been discussed in \cite{EvansHewitt}, but we are going to recapitulate them for the convenience of the reader and to build on the construction in the further steps. The profinite group $\G$ in (1) has been known for a long time~\cite{Witt}. Its properties were used in~\cite{EvansHewitt} to construct the profinite group $\G'$ that is isomorphic, but not topologically isomorphic to it. The proof of step (2) is due to an idea of Cherlin and Hrushovski, and (7) to another idea of Hrushovski.

The biggest technical challenge is step (3), and similarly, step (5). It is worth noting that we do not know whether (3) and (5) are true in general; our proof depends on the particular structure of the group $\G$ from~(1). In fact, our proof will deviate from the above presentation in that we will not directly work with $\G$ but with a factor thereof. We find it, however, useful to have the above schema in mind since it does reflect the general proof idea.

\subsection{Profinite groups}
\label{sect:profinite}

In this section we are going to discuss the profinite group $\G$ that will be the basis of our counterexample. We say a subgroup $\F' \leq \G$ is a \textit{complement} of a normal subgroup $\F$ of $\G$ iff $\G = \F \cdot \F'$ and $\F \cap \F'$ is the identity subgroup.

\begin{proposition}\label{prop:profinite_1}
There exists a separable profinite group $\G$ with the following properties:
\begin{itemize}
\item $\G$ has a non-trivial, finite central subgroup $\F$ with a dense complement $\F'$ in $\G$;
\item any complement of any finite central subgroup of $\G$ is dense in $\G$.
\end{itemize}
\end{proposition}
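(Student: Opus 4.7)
The plan has two parts, corresponding to the two bullets.

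For the first bullet, I would invoke the classical construction of Witt~\cite{Witt}, which furnishes a separable profinite group $\G$ together with a non-trivial finite central subgroup $\F$ admitting an abstract complement $\F' \leq \G$ that is dense but not closed. The first step is to recall enough of this construction (typically as a suitable profinite completion of an abstract group) to confirm separability, i.e.\ that $\G$ has a countable base of open neighbourhoods of the identity, and to fix the pair $(\F, \F')$ for use in the second part.

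For the second bullet, the key observation is that in any profinite group, a closed subgroup of finite index is automatically open. Hence, if $\F_0$ is a finite central subgroup of $\G$ and $\F_0' \leq \G$ is any \emph{closed} abstract complement of $\F_0$, then $\F_0'$ is open of index $|\F_0|$, and the internal decomposition $\G = \F_0 \cdot \F_0'$ with $\F_0 \cap \F_0' = \{1\}$ is automatically a topological direct product, since $\F_0$ is central. Therefore, proving the second bullet reduces to showing that $\G$ admits no non-trivial finite central subgroup as a direct topological factor.

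To establish this indecomposability, I would exploit the specific nature of Witt's construction, arguing that the distinguished $\F$ from the first bullet is characteristic in $\G$ in a suitable sense (say, as the whole finite part of the centre, or as the image of a canonical cocycle), so that a hypothetical direct factorization $\G \cong \F_0 \times \H$ with $\F_0$ a non-trivial finite central subgroup and $\H \leq \G$ closed would, via a short-exact-sequence argument, ultimately force the dense complement $\F'$ of $\F$ to coincide with an open subgroup, contradicting its density together with $\F \neq 1$. Alternatively, one could work within the explicit model provided by Witt, where the open normal subgroups are known, and rule out direct factorizations by hand.

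The main obstacle I anticipate is precisely this last indecomposability argument: while the existence of one dense complement is essentially Witt's theorem, promoting it to ``every abstract complement of every finite central subgroup is dense'' requires additional structural control on $\G$. Extracting this, presumably via continuous cohomology $H^2(\G/\F_0, \F_0)$ or via a direct analysis of the lattice of open normal subgroups of $\G$, is where the real work will lie.
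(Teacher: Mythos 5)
The paper does not actually prove this proposition: it is imported wholesale from \cite[Theorem~4.1]{EvansHewitt}, with the remark that the group itself goes back to \cite{Witt}. So your appeal to Witt's construction for the first bullet matches what the paper does; the divergence is that you then attempt to derive the second bullet yourself rather than citing it, and that derivation is where your proposal has a genuine gap.

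There are two problems. First, your reduction is logically incomplete: you rule out \emph{closed} complements of a finite central subgroup $\F_0$ (correctly observing that a closed complement would be open of finite index and would yield a topological direct factorization), but a non-dense complement need not be closed. If $\F_0'$ is a non-dense complement, then $\H := \overline{\F_0'}$ is a proper open normal subgroup of $\G$ with $\F_0 \H = \G$, yet $\F_0 \cap \H$ may be a non-trivial proper subgroup of $\F_0$ (and need not admit a complement inside $\F_0$), so no direct factorization of $\G$ is produced and your indecomposability criterion does not apply. What actually has to be excluded is the existence of \emph{any} proper open normal subgroup $\H$ with $\F_0\H = \G$ for some finite central $\F_0$, not merely the split case. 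Second, even granting the reduction, the indecomposability claim you defer to ``continuous cohomology or a direct analysis of the lattice of open normal subgroups'' \emph{is} the second bullet; it is exactly the property that Lemma~\ref{lem:profinite_2} and Corollary~\ref{cor:groups} later depend on, so leaving it open means the substantive content of the proposition remains unproved. The fix is either to carry out that structural analysis explicitly for Witt's group, or to do as the paper does and invoke \cite[Theorem~4.1]{EvansHewitt}, where both bullets are established.
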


The construction of this profinite group can be found in~\cite[Theorem~4.1]{EvansHewitt},  where it is also used to answer a question about relative categoricity.  We remark that the same group had already been constructed in~ \cite{Witt} in a different context, namely to provide an example of a compact separable group with a non-compact commutator subgroup.

\begin{lemma}\label{lem:profinite_2}
Let $\G, \F$ and $\F'$ be as in Proposition~\ref{prop:profinite_1}. Then:
\begin{itemize}
\item $\G/\F$ is a profinite group which is isomorphic, but not topologically isomorphic to $\F'$;
\item $\G$ and $\G/\F\mult \F$ are isomorphic as groups, but are not topologically isomorphic.
\end{itemize}
\end{lemma}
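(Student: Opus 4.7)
The plan is to establish the two bullets separately, relying on the two properties of $\G$ granted by Proposition~\ref{prop:profinite_1}: the existence of a dense complement $\F'$ of $\F$, and the density of every complement of every finite central subgroup. Both abstract isomorphisms will essentially be forced by the definition of complement; the actual work lies in each topological non-isomorphism statement.

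For the first bullet, I would first observe that restricting the quotient map $\G \to \G/\F$ to $\F'$ yields a group homomorphism $\F' \to \G/\F$ which is injective because $\F \cap \F' = \{1\}$ and surjective because $\G = \F \cdot \F'$; thus $\F' \cong \G/\F$ as abstract groups. Equipping $\F'$ with the subspace topology inherited from $\G$, this map is continuous but cannot be a homeomorphism: since $\F$ is nontrivial, $\F'$ is a proper subgroup of $\G$, yet it is dense by Proposition~\ref{prop:profinite_1} and so fails to be closed in the compact Hausdorff group $\G$, hence fails to be compact; meanwhile $\G/\F$ is a profinite (hence compact) group, so no topological isomorphism between them can exist.

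For the second bullet, the abstract isomorphism follows from centrality of $\F$: the subgroups $\F$ and $\F'$ commute elementwise, so together with $\F \cap \F' = \{1\}$ and $\G = \F \cdot \F'$ one concludes $\G \cong \F \times \F'$ as abstract groups, and composing with the first bullet yields $\G \cong \F \times \G/\F$. For the topological part, I would assume for contradiction that some topological isomorphism $\phi \colon \G \to \G/\F \times \F$ exists. Then $\phi^{-1}(\{1\} \times \F)$ is a finite central subgroup of $\G$, and $\phi^{-1}(\G/\F \times \{1\})$ is a closed complement of it, since complementarity of the pair $(\{1\}\times \F,\G/\F \times \{1\})$ inside $\G/\F \times \F$ is preserved by the group isomorphism $\phi^{-1}$. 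By the second clause of Proposition~\ref{prop:profinite_1}, any such complement must be dense in $\G$; being also closed, it has to equal $\G$, forcing $\phi^{-1}(\{1\} \times \F)$ to be trivial, contradicting that $\phi$ is a bijection and $\F$ is nontrivial.

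I do not foresee any serious obstacle here: the argument converts the topological non-splitting built into $\G$ by Proposition~\ref{prop:profinite_1} directly into an obstruction for both topological isomorphisms. The only point requiring mild care is to verify that $\phi^{-1}(\G/\F \times \{1\})$ really is a complement of $\phi^{-1}(\{1\} \times \F)$ in $\G$, which is immediate because group isomorphisms transport internal direct product decompositions, and that the complement in question is closed, which is automatic from continuity of $\phi$ together with the fact that $\G/\F \times \{1\}$ is closed in $\G/\F \times \F$.
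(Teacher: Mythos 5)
Your proposal is correct and follows essentially the same route as the paper: the restriction of the quotient map to $\F'$ gives the abstract isomorphism, compactness of $\G/\F$ versus non-closedness of the dense proper subgroup $\F'$ rules out a topological one, and for the second bullet the density of every complement of every finite central subgroup (second clause of Proposition~\ref{prop:profinite_1}) yields the contradiction. The only cosmetic difference is that you transport the decomposition along $\phi^{-1}$ and conclude via ``closed and dense implies everything,'' whereas the paper pushes it forward and contradicts compactness of the image of $\G/\F$; these are the same argument.
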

\begin{proof}
Since $\F$ is central we have that $\G = \F' \cdot \F$. Since moreover $\F' \cap \F$ is the identity subgroup, every $g \in \G$ has a unique representation $g = f' f$, where $f' \in \F'$ and $f \in \F$. Hence every coset $g \F$ contains exactly one representative from $\F'$. So the restriction of the quotient homomorphism $\G \to \G/\F$ to $\F'$ is bijective and thus an isomorphism. Since $\F$ is closed, $\G/\F$ is a profinite group; in particular it is compact. By Proposition \ref{prop:profinite_1} $\F'$ is not closed in $\G$ and therefore not compact. So $\G/\F$ and $\F'$ cannot be topologically isomorphic.

Since $\F$ is central in $\G$, we have that $\F' \mult \F$ is isomorphic to $\G$, and so is $\G/\F\mult \F$ by the above. However, no isomorphism from $\G/\F \mult \F$ to $\G$ can be a topological one. Otherwise, the image of $\F$ (viewed as a subgroup of $\G/\F \mult \F$ in the natural embedding) would be central in $\G$ and so the image of $\G/\F$ would have to be a proper dense subgroup of $\G$, by Proposition~\ref{prop:profinite_1}. Therefore it would not be closed, contradicting compactness.
\end{proof}

\begin{nota}\label{nota:profinite_3}
From now on, we fix groups $\G$, $\F$, and $\F'$ as in Proposition~\ref{prop:profinite_1}. We moreover denote the isomorphism from $\G/\F$ onto $\F'$ which sends every class $g\F$ to the unique element in $g\F\cap \F'$ by $\kappa$.
\end{nota}

We remark that the axiom of choice was used to show the existence of the pair of subgroups $\F, \F'$ in $\G$ in Proposition~\ref{prop:profinite_1}. This seems unavoidable: it is well-known that every Baire measurable homomorphism between Polish groups is continuous (see e.g.~\cite{Kechris}). Further the statement that every set is Baire measurable is consistent with ZF+DC (\cite{Shelah84}). Thus the existence of two separable profinite groups (respectively two closed oligomorphic groups) that are isomorphic, but not topologically isomorphic, cannot be proven in ZF+DC (see the discussion in~\cite{Topo-Birk}). The  insufficiency of  ZF+DC to construct a non-continuous homomorphism between Polish groups  was  already  observed in \cite{Lascar91}.

\subsection{Encoding profinite groups as factors of oligomorphic groups}

The next step is to describe a given separable profinite group as a factor of two oligomorphic permutation groups. Our argument is a generalization of an argument of Cherlin and Hrushovski, which can be used to show that there are oligomorphic groups without the small index property \cite{Lascar82}. A similar construction is also used in \cite{BPP-projective-homomorphisms} to show that there is an oligomorphic clone on a countable set with a discontinuous homomorphism onto the projection clone. The result also appears in \cite{EvansHewitt}.

\begin{prop}\label{prop:encoding_1}
There is a closed oligomorphic permutation group $\Phi$ on a countable set $X$ such that for any separable profinite group $\R$ there exists a closed permutation group $\Sigma_\R$ such that $\Phi \leq \Sigma_\R \leq \Sym(X)$ and:
\begin{itemize}
\item $\Phi$ is a closed normal subgroup of $\Sigma_\R$,
\item $\Phi$ is the intersection of the open subgroups of $\Sigma_\R$ of finite index,
\item $\R\cong_T \Sigma_\R / \Phi$.
\end{itemize}
\end{prop}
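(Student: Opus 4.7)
The plan is to realise every separable profinite group $\R$ as a topological-group quotient $\Sigma_\R/\Phi$ of a closed oligomorphic permutation group, via the construction of Cherlin and Hrushovski alluded to in the text.

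First, I would represent $\R$ as an inverse limit of finite groups. Since $\R$ is separable profinite it is metrizable, so there is a descending sequence $N_1 \supseteq N_2 \supseteq \cdots$ of open normal subgroups with $\bigcap_i N_i = \{1\}$; setting $R_i := \R/N_i$ yields $\R = \varprojlim R_i$ with surjective transition maps $R_{i+1} \to R_i$. Next, I would fix, once and for all, a countable $\omega$-categorical structure $\mathfrak{X}$ on a countable set $X$ that carries a tower of equivalence relations together with enough additional ``shuffling'' freedom, chosen in a uniform way that does not depend on $\R$. Set $\Phi := \Aut(\mathfrak{X})$; this is a closed oligomorphic subgroup of $\Sym(X)$.

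Then, for the given $\R$, I would fix compatible identifications of the finite quotients of $X$ along the tower with $R_i$-torsors, and define $\Sigma_\R$ to be the group of those $\sigma \in \Sym(X)$ that preserve the relational structure underlying $\Phi$ and induce on the $i$th quotient set a left translation by some $g_i \in R_i$, the sequence $(g_i)_i$ being required to form an element of $\R = \varprojlim R_i$. By construction $\Phi \leq \Sigma_\R$, the subgroup $\Phi$ is normal in $\Sigma_\R$, and the map $\sigma \mapsto (g_i)_i \in \R$ is a group homomorphism with kernel $\Phi$. Surjectivity of this homomorphism comes from the homogeneity of $\mathfrak{X}$: any compatible partial automorphism extends to a full automorphism of $\mathfrak{X}$.

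The remaining verifications --- that $\Sigma_\R$ is closed in $\Sym(X)$; that the induced bijection $\Sigma_\R/\Phi \to \R$ is a topological group isomorphism (using compactness of $\R$, so that a continuous group bijection onto it from a Polish group is automatically a homeomorphism); and that $\Phi$ coincides with the intersection of the open subgroups of $\Sigma_\R$ of finite index (because under $\Sigma_\R/\Phi \cong \R$ those correspond exactly to open subgroups of $\R$ of finite index, whose intersection in $\R$ is trivial since $\R$ is profinite) --- are then relatively routine. The main obstacle is the design of $\mathfrak{X}$: the group $\Phi = \Aut(\mathfrak{X})$ must be simultaneously oligomorphic, independent of $\R$, and yet ``permeable'' enough that for every choice of the $R_i$-torsor identifications the overlying group $\Sigma_\R$ is still a closed subgroup of $\Sym(X)$ with $\Sigma_\R/\Phi$ faithfully equal to $\R$. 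The Cherlin--Hrushovski device is to encode only the unlabelled equivalence-class tower in $\mathfrak{X}$ and to supply the $R_i$-torsor identifications as external data when defining $\Sigma_\R$, so that $\Phi$ does not see the $\R$-dependence while $\Sigma_\R/\Phi$ recovers $\R$ exactly.
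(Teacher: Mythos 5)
Your overall architecture (a fixed ``unlabelled tower'' structure giving $\Phi$, an overlying group $\Sigma_\R$ permuting the classes, and a quotient map onto $\R$) is the right one, but two essential points are missing. The more serious one is the second bullet. You assert that the open finite-index subgroups of $\Sigma_\R$ ``correspond exactly to open subgroups of $\R$ of finite index''; that correspondence only covers the open finite-index subgroups of $\Sigma_\R$ that \emph{contain} $\Phi$, and it yields only the inclusion of the intersection into $\Phi$. For the reverse inclusion you must rule out open finite-index subgroups of $\Sigma_\R$ that do not contain $\Phi$, i.e.\ you must prove that $\Phi$ itself has no proper open subgroup of finite index. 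This is not routine and is the main technical content of the paper's proof: assuming such a subgroup $\Lambda$ with $\Phi_{(\bar y)}\leq\Lambda$, one analyses the finite partition of the orbit $O_\Phi(\bar y)$ into cosets $gO_\Lambda(\bar y)$ and uses the extension property of the Fra\"iss\'e limit to produce tuples $\bar a,\bar b,\bar d$ with $(\bar y,\bar a)$, $(\bar d,\bar a)$, $(\bar d,\bar b)$ in one $\Phi$-orbit, forcing two distinct blocks to coincide. Without this, the characterisation of $\Phi$ inside $\Sigma_\R$ --- which is exactly what the later non-reconstruction argument (Corollary~\ref{cor:groups}) relies on --- is unproved.

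The second gap is an internal inconsistency in your construction: you require the structure underlying $\Phi$ to be chosen ``uniformly, not depending on $\R$'', yet you also want the $i$th quotient set to be an $R_i$-torsor, whose cardinality $|R_i|$ does depend on $\R$. A single fixed tower cannot serve all separable profinite groups in this way (and if you enlarge the tower to contain levels of every finite size, the kernel of your map $\sigma\mapsto(g_i)_i$ is strictly larger than $\Phi$ unless you add further conditions on the unused levels). The paper resolves this by proving the statement first for the single universal group $\R=\prod_{n\geq 1}\Sym(n)$, with $X_n$ of size $n$ carrying the full symmetric action rather than a regular one, and then realising an arbitrary separable profinite group $\R'$ as a closed subgroup of $\prod_{n\geq 1}\Sym(n)$ and setting $\Sigma_{\R'}:=\mu_\R^{-1}(\R')$; closedness of $\Sigma_{\R'}$ and the transfer of the finite-index characterisation then come for free from continuity of $\mu_\R$ and the already-established fact that $\Phi$ has no proper open finite-index subgroup. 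You need some version of this universality-and-preimage step to make a single $\Phi$ work for all $\R$.
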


\begin{proof}

We first prove the proposition for the special case $\R = \prod_{n \geq 1} \Sym(n)$. Let $L$ be the language containing an $n$-ary relation symbol $P_i^n$ for all integers $1 \leq i \leq n$. Then we consider the class of all finite $L$-structures such that
\begin{itemize}
\item for all $n \geq i \geq 1$: $P_i^n(\bar x)$ implies that the entries of $\bar x$ are distinct;
\item for all $n\geq 1$: $P_1^n$, \ldots, $P_n^n$ form a partition of the $n$-tuples with distinct entries.
\end{itemize}

It is easy to verify that this class is a Fra\"iss\'e-class. Thus there is a unique countable homogeneous structure $\cA^* = (A,(P_i^n)_{n \geq i \geq 1})$ whose \emph{age}, i.e., its set of finite induced substructures up to isomorphism, is equal to this class. Since the number of relations of any fixed arity in  $\cA^*$ is finite, $\cA^*$ is $\omega$-categorical. We set $\Phi$ to be the automorphism group of $\cA^*$.

For every $n$, let $E^n(\bar x,\bar y) $ be the $2n$-ary relation on $A$ that holds if and only if $\bar x$ and $\bar y$ are members of the same partition class $P_i^n$. By definition, the relation $E^n$ forms an equivalence relation on the $n$-tuples with distinct entries that has the sets $P_i^n$ as equivalence classes. We set $\Sigma_\R$ to be the automorphism group of $(A,(E^n)_{n \geq 1})$. Clearly every $E^n$ is definable in $\cA^*$, so $\Phi \leq \Sigma_\R$. By verifying that $(A,(E^n)_{n \geq 1})$ has the extension property, one can easily see that it is a homogeneous structure.

Every function in $\Sigma_\R$ induces a permutation on the set $X_n := \{P_i^n : 1\leq i \leq n\}$, for every $n \geq 1$. The action of $\Sigma_\R$ on the disjoint union of $X_n$ gives us a homomorphism $\mu_\R \colon \Sigma_\R \to \R$. The homogeneity of $(A,(E^n)_{n \geq 1})$ guarantees that every permutation on a finite subset of $\bigcup_{n \geq 1} X_n$ (respecting the arities $n$) is induced by an element of $\Sigma_\R$. This fact, together with a standard back-and-forth-argument, implies that we can obtain every permutation on the full union $\bigcup_{n \geq 1} X_n$ as the action of an element of $\Sigma_\R$. In other words, $\mu_{\R}$ is surjective. Every stabilizer in $\Sigma_\R$ of a finite subset of $\bigcup_{n \geq 1} X_n$ is an open subgroup, hence $\mu_\R$ is continuous and open. The kernel of $\mu_\R$ is $\Phi$, so we have $\Sigma_\R / \Phi \cong_T \R$. %We remark that $\Phi$ can be regarded as the automorphism group of the structure $\cA^* = (A,(P_i^n)_{n \geq i \geq 1})$, which is homogeneous and $\omega$-categorical by the same back-and-forth arguments used in the last paragraph. %A finite structure is isomorphic to a substructure of $\cA^*$ if and only if the relations $(P_i^n)_{i \geq 1}$ form a partition of the $n$-tuples with distinct elements, for every fixed $n \geq 1$.

Finally, we want to prove that $\Phi$ is the intersection of the open subgroups of $\Sigma_\R$ of finite index. It is clear that $\Phi$ contains this intersection, since $\Phi$ is the intersection of the preimages of all the stabilizers of $X_n$, $n \geq 1$, under the action of $\mu_\R$. It remains to show that $\Phi$ has no proper open subgroup of finite index.

Suppose that $\Phi$ has a proper open subgroup $\Lambda \leq \Phi$ of finite index. Since $\Lambda$ is open, there is a finite tuple $\bar y$ of distinct elements in $A$ such that its stabilizer $\Phi_{(\bar y)}$ lies entirely in $\Lambda$. We will obtain a contradiction by studying the actions of $\Phi$ and $\Lambda$ on $\bar y$. Let $O_{\Phi}(\bar y) := \{ g(\bar y) : g \in \Phi \}$ and  $O_{\Lambda}(\bar y) := \{ g(\bar y) : g \in \Lambda \}$ be the orbits of $\bar y$ under these actions. Now $O_{\Phi}(\bar y)$ can be partitioned into subsets of the form $g O_{\Lambda}(\bar y)$, where $g \in \Phi$. This partition is clearly preserved under the action of $\Phi$. For all $g \in \Phi$ the following holds:
\[ g( \bar y) \in O_{\Lambda}(\bar y) \Leftrightarrow \exists h \in \Lambda ( g(\bar y) = h(\bar y) ) \Leftrightarrow \exists h \in \Lambda ( h^{-1} \circ g \in \Phi_{(\bar y)} ) \Leftrightarrow g \in \Lambda. \]
Thus the index $|\Lambda : \Phi|$ coincides with the number of partition classes $g O_{\Lambda}(\bar y)$ in $O_{\Phi}(\bar y)$. Since this index is greater than 1, there exists $\bar b \in O_{\Phi}(\bar y)$ outside the class $O_{\Lambda}(\bar y)$.

We next claim that there exists a tuple $\bar a \in O_{\Lambda}(\bar y)$ such that all elements of the tuple $(\bar a, \bar y)$ are distinct. Otherwise in every tuple $(\bar a, \bar y)$ with $\bar a \in O_{\Lambda}(\bar y)$ an equation $a_i = y_j$ holds; we will derive a contradiction. For all $i\in\omega$, pick  $f_i\in\Phi$ such that for all $i\neq j$ the tuples $f_i(\bar y)$ and $f_j(\bar y)$ contain no common values. This is possible by the construction of $\cA^*$. By our assumption, for every function $g$ in the coset $f_i\Lambda$, $g(\bar y)$ contains an element of the tuple $f_i(\bar y)$. By the choice of the $f_i$, it follows that for $i\neq j$, the cosets $f_i\Lambda$ and $f_j\Lambda$ are disjoint. This is a contradiction to the finite index of $\Lambda$ in $\Phi$.

There exists $\bar d \in O_{\Phi}(\bar y)$ such that the tuples $(\bar y, \bar a), (\bar d, \bar a)$, and $(\bar d, \bar b)$ lie in the same orbit with respect to the action of $\Phi$. This follows from the extension property of $\cA^*$. %
%The age of $\cA$ consists of all finite structures with relations $(P_i^n)_{i \geq 1}$ that partition the $n$-tuples with distinct elements, for every $n \geq 1$. Let us consider the union of the elements in $\bar y, \bar a, \bar b$ with the elements of tuple $\bar d$ of the length of $\bar y$. We can provide this union with partition relations $(P_i^n)_{n \geq i \geq 1}$ such that $(\bar y, \bar a), (\bar d, \bar a)$, and $(\bar d, \bar b)$ are isomorphic and the resulting structure lies in the age of $\cA$. Since $\cA$ is homogeneous, this proves our claim.
So there are functions $h_1,h_2 \in \Phi$ such that $h_1(\bar y, \bar a) = (\bar d, \bar a)$ and $h_2(\bar d, \bar a) = (\bar d, \bar b)$. Since $\Phi$ preserves our partition, $h_1(\bar y, \bar a) = (\bar d, \bar a)$ implies that $\bar d$ lies in $O_{\Lambda}(\bar y)$. But because of $h_2(\bar d ,\bar a) = (\bar d, \bar b)$ also $\bar b$ lies in the same class, which is a contradiction.

We have shown the proposition for $\R = \prod_{n \geq 1} \Sym(n)$. Let now $\R'$ be an arbitrary separable profinite group. As such, it is topologically isomorphic to a closed subgroup of $\R$, so without loss of generality let $\R' \leq \R$. We set $\Sigma_{\R'}$ to be the preimage of $\R'$ under $\mu_{\R}$. Clearly then $\R' \cong_T \Sigma_{\R'} / \Phi$. Again $\Phi$ is the intersection of all the stabilizers of $X_n$ in $\Sigma_{\R'}$ for $n \geq 1$, implying that the intersection of all open subgroups of finite index in $\Sigma_{\R'}$ is contained in $\Phi$. Since $\Phi$ has no proper open subgroup of finite index, they are equal.
\end{proof}

\begin{nota}
From now on let $\Phi$ be the oligomorphic permutation group defined in the proof of Proposition~\ref{prop:encoding_1} and $A$ be its domain. Also let $\mu_\R\colon \Sigma_\R \to \R$ be the quotient mapping described in the proof.
\end{nota}

\subsection{Lifting the isomorphism to the encoding groups}\label{sect:lift1}

Let $\G$ be as in Proposition~\ref{prop:profinite_1}. The most natural next step in the proof might be to lift the non-topological isomorphism between $\G$ and $\G':=\G/\F\mult\F$ to an isomorphism between $\Sigma_\G$ and $\Sigma_{\G'}$. However, we do not know if this is possible. Instead, we will work with $\Sigma_{\G/\F}\times \F$ and the closure of $\Sigma_{\G/\F}$ in a discontinuous action as a permutation group.

As technical preparation for this, we will now provide a particular representation of the topological group $\G$ as a permutation group (i.e., a topological isomorphism with a permutation group).

\subsubsection{A representation of $\G$ as a permutation group}
As a separable profinite group, $\G$ contains a countable sequence $(\G_i)_{i\in\omega}$ of open normal subgroups with trivial intersection. Since $\G$ is compact, the factor groups $\G/\G_i$ are finite.
 Letting $\G$ act on the disjoint union of the factor groups by translation, we obtain a topologically faithful action of $\G$, i.e., a representation of $\G$ as a closed permutation group on the countable set $\bigcup_{i\in\omega}\G/\G_i$. In particular, we then have a representation of the subgroup $\F'$ as a (non-closed) permutation group on $\bigcup_{i \in \omega}\G/\G_i$.

Recall that $\F'$ is naturally isomorphic to $\G/\F$, but not topologically isomorphic to it. In the following, we will pick the open normal subgroups $\G_i$ mentioned above in such a way that the restriction of the action of $\F'$ to $\bigcup_{i\geq 1}\G/\G_i$ (where $\G/\G_0$ is missing) will still be faithful and hence isomorphic to $\F'$; however, it will be topologically isomorphic to $\G/\F$, and in particular not topologically isomorphic to $\F'$. Note that the topology on $\G/\F$ is obtained from the topology of $\F'$ by factorizing modulo $\F$, and hence is coarser than the topology on $\F'$, making such an undertaking possible.

Our action of $\G$ on $\bigcup_{i\in\omega}\G/\G_i$ will moreover have the property that its restriction to $\F'$ will be isomorphic (as an action) to an action of $\F'$ on the disjoint union $\bigcup_{i\in\omega}\F'/\F'_i$ of certain coset spaces of $\F'$, rather than of $\G$. Hence, it can be defined from $\F'$ alone. In particular, since $\F'$ is dense in $\G$, the action of $\G$ can be reconstructed from $\F'$ and a particular sequence of normal subgroups $(\F'_i)_{i\in\omega}$ thereof. Note that not all of the $\F'_i$ will be open, since the action of $\F'$ on $\bigcup_{i\in \omega}\F'/\F'_i$ is not a closed permutation group. In fact, only $\F'_0$ will be non-open.

It is this particular representation of $\G$ as a permutation group which will allow us to lift isomorphisms to the oligomorphic permutation groups encoding our profinite groups. Note that we use the particular structure of $\G$, e.g., the density of $\F'$, to obtain the representation.

To obtain the desired open normal subgroups, we first pick a sequence $(\H_i)_{i\geq 1}$ of open normal subgroups of $\G/\F$ whose intersection is the identity. The sequence exists since $\F$ is closed and so $\G/\F$ is profinite. We now set $\G_i$ to be the preimage of $\H_i$ under the quotient mapping, i.e., $\G_i:=\{hf\;|\; hF\in \H_i\text{ and } f\in\F\}$, for all $i\geq 1$. So each $\G_i$ is an open normal subgroup of $\G$, and $\bigcap_{i\geq 1}\G_i=\F$. To finish the construction, we pick an open normal subgroup $\G_0$ of $\G$ whose intersection with $\F$ is the identity; this is possible, because by profiniteness $\G$ contains a sequence of open normal subgroups with trivial intersection, and because $\F$ is finite. 
Finally, we set $\F'_i:=\F'\cap\G_i$, for all $i\in\omega$.

\begin{nota}
We now fix $(\G_i)_{i\in\omega}$ and $(\F'_i)_{i\in\omega}$ as above, and let $\tau\colon \G\To \Sym(\bigcup_{i\in\omega}\G/\G_i)$ be the mapping which sends an element $g$ of $\G$ to the permutation acting on $\bigcup_{i\in\omega}\G/\G_i$ by translation with $g$.
%\begin{align*}
%\G \mult \bigcup_{i\in\omega}\G/\G_i &\to \bigcup_{i\in\omega}\G/\G_i\\
%(g, \tilde g G_i) &\mapsto (g  \tilde g) G_i\; .
%\end{align*}
\end{nota}

\begin{lem}\label{lem:representation}\ 
\begin{enumerate}
\item[(1)] $\tau$ is faithful and continuous;
\item[(2)] $\F$ is the stabilizer of $\bigcup_{i \geq 1}\G/\G_i$ under the action $\tau$;
\item[(3)] the restriction of $\tau(\F')$ to $\bigcup_{i \geq 1}\G/\G_i$ is a permutation group that is topologically isomorphic to $\G/\F$;
\item[(4)] the actions of $\F'$ on $\bigcup_{i\in\omega}\G/\G_i$ (via $\tau$) and on $\bigcup_{i\in\omega}\F'/\F_i'$ (by translation) are isomorphic.
\item[(5)] the closure of $\F'$ in $\Sym(\bigcup_{i \in \omega} \F'/\F_i')$ is isomorphic to $\G$. 
\end{enumerate}
\end{lem}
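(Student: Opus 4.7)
The proof is largely a matter of unpacking the chosen properties of the $\G_i$: one has $\bigcap_{i \in \omega} \G_i = \G_0 \cap \F = \{e\}$ while $\bigcap_{i \geq 1} \G_i = \F$, the subgroup $\F$ is central, $\G = \F'\F$, and $\F' \cap \F = \{e\}$. Part \emph{(1)} follows immediately from these observations: any $g \in \ker\tau$ stabilises each coset $\G_i$ itself, so lies in $\bigcap_{i \in \omega} \G_i = \{e\}$; continuity holds because the preimage in $\G$ of the (subbasic) stabiliser of a coset $h\G_i$ is exactly the coset $h\G_i$ of the open subgroup $\G_i$. For \emph{(2)}, using centrality of $\F$ and the inclusion $\F \subseteq \G_i$ for $i \geq 1$, an element $g$ fixes all cosets in $\bigcup_{i\geq 1}\G/\G_i$ if and only if it lies in $\bigcap_{i\geq 1}\G_i = \F$.

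For \emph{(3)}, the restriction of $\tau|_{\F'}$ to $\bigcup_{i \geq 1} \G/\G_i$ has kernel $\F' \cap \F = \{e\}$ and thus realises $\F'$ faithfully as a permutation group on this set, while the map $\iota\colon f' \mapsto f'\F$ identifies $\F'$ with $\G/\F$ as abstract groups (this is essentially $\kappa^{-1}$). I would verify that $\iota$ is in fact a topological isomorphism by comparing neighbourhood bases of the identity: the stabiliser of $\G_i$ in the restricted action is $\F'_i = \F' \cap \G_i$, which under $\iota$ maps onto $\F'_i \F/\F$; since $\F \subseteq \G_i$ for $i \geq 1$ and $\G = \F'\F$ one checks $\F'_i\F = \G_i$, so the image equals $\G_i/\F = \H_i$, and the $\H_i$ form a neighbourhood basis of the identity in $\G/\F$.

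For \emph{(4)}, the obvious level-wise map $f'\F'_i \mapsto f'\G_i$ is well-defined and injective by $\F'_i = \F' \cap \G_i$; it is surjective because every coset $g\G_i$ admits an $\F'$-representative (for $i \geq 1$ use $\G = \F'\F$ with $\F \subseteq \G_i$, and for $i = 0$ use density of $\F'$ in the discrete finite quotient $\G/\G_0$); and it is $\F'$-equivariant since both actions are by left translation. Part \emph{(5)} then follows by transferring along this equivariant bijection: it suffices to compute the closure of $\tau(\F')$ in $\Sym(\bigcup_{i\in\omega}\G/\G_i)$. By (1) $\tau$ is a continuous injection of the compact group $\G$ into a Hausdorff topological group, hence a topological embedding with closed image $\tau(\G)$; continuity of $\tau$ together with density of $\F'$ in $\G$ forces $\tau(\F')$ to be dense in $\tau(\G)$, so $\overline{\tau(\F')} = \tau(\G) \cong_T \G$.

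There is no real technical obstacle, but the delicate point is the asymmetry between parts (3) and (5): omitting the index $i = 0$ in (3) strictly coarsens the subspace topology on $\F'$ inherited from the full $\Sym(\bigcup_{i \in \omega}\G/\G_i)$, collapsing the natural (closed) action isomorphic to $\G$ down to the action that is only topologically isomorphic to $\G/\F$. Keeping this distinction straight throughout the bookkeeping, and in particular using $\F \subseteq \G_i$ only for $i\geq 1$ while using $\F \cap \G_0 = \{e\}$ at $i=0$, is what the entire representation rests on.
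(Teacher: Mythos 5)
Your proof is correct and follows essentially the same route as the paper's (which is much terser): faithfulness and continuity from the $\G_i$ being open normal with trivial intersection, the identification $\G/\G_i \cong \F'/\F'_i$ via $\G = \F'\F$ and density, and compactness of $\tau(\G)$ plus density of $\F'$ for part (5). The only quibble is a harmless slip in (1) — the preimage of the stabilizer of the point $h\G_i$ is $\G_i$ itself by normality (preimages of general subbasic sets are cosets $k\G_i h^{-1}$), but openness, which is all you need, holds either way.
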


\begin{proof} \
\begin{enumerate}
\item[(1)] The elements of the family $(\G_i)_{i\in\omega}$ are open normal subgroups of $\G$ with trivial intersection. Thus $\tau$ is faithful and continuous.
\item[(2)] Since $\F$ is the intersection of all $(\G_i)_{i \geq 1}$, it is the stabilizer of $\bigcup_{i \geq 1}\G/\G_i$.
\item[(3)] For every $i \geq 1$ the quotient group $\G/\G_i$ is isomorphic to $(\G/\F) / (\G_i/\F)$. Thus the action of $\F'$ on $\bigcup_{i\geq 1}\G/\G_i$ is isomorphic to the action of $\F'$ on $\bigcup_{i\geq 1}(\G/\F) / (\G_i/\F)$, which is a representation of $\G/\F$ as permutation group since the intersection of the factors $(\G_i/\F)$ is trivial by choice of the $\G_i$.
\item[(4)] Since $\F'$ is dense in $\G$ and all $\G_i$ are open, every coset in $\bigcup_{i\in\omega}\G/\G_i$ contains an element of $\F'$. Thus
\[ \G/\G_i = \F'\G_i/\G_i \cong \F'/(\F' \cap \G_i) = \F'/\F_i'. \]
One can now easily verify that the actions of $\F'$ on $\bigcup_{i\in\omega}\G/\G_i$ and on $\bigcup_{i\in\omega}\F'/\F_i'$ are isomorphic.
\item[(5)] This follows from (4) as $\F'$ is dense in $\G$. 
\end{enumerate}
\end{proof}

\subsubsection{The lifting}\label{sect:lift}
We will now consider a discontinuous action of $\Sigma_{\G/\F}$, similarly to the action of $\F'$ on $\bigcup_{i\in\omega}\G/\G_i$ in Lemma~\ref{lem:representation}, which is discontinuous if considered as an action of $\G/\F$ rather than of $\F'$: otherwise it would be closed as a permutation group, but its closure as a permutation group is topologically isomorphic to $\G$.

The quotient homomorphism $\mu_{\G/\F}\colon \Sigma_{\G/\F} \to \G/\F$ from Proposition~\ref{prop:encoding_1} gives rise to an action of $\Sigma_{\G/\F}$ on the cosets $\bigcup_{i \in \omega} \G/\G_i$ by simply considering the composition $\tau\circ\kappa\circ \mu_{\G/\F}$. If we restrict this action to $\bigcup_{i \geq 1} \G/\G_i$ then it is continuous, as the composition of continuous functions. But if we regard the action on $\bigcup_{i \in \omega} \G/\G_i$, the action fails to be continuous, since the induced permutation group is topologically isomorphic to the non-closed $\F'$.

Recall that $\Sigma_{\G/\F}$ was defined as a closed, oligomorphic permutation group on a countable set $A$. Clearly, the combined action of $\Sigma_{\G/\F}$ on $A \cup \G/\G_0$ fails to be continuous. By $\chi$ we denote the embedding of $\Sigma_{\G/\F}$ into $\Sym(A \cup \G/\G_0)$. Then, analogously to $\F'$ in the profinite case, $\chi[\Sigma_{\G/\F}]$ is not closed in $\Sym(A \cup \G/\G_0)$.

\begin{nota}
Henceforth $\chi$ will denote the action of $\Sigma_{\G/\F}$ on $A \cup \G/\G_0$, and $\Gamma$ the closure of $\chi[\Sigma_{\G/\F}]$ in $\Sym(A \cup \G/\G_0)$.
\end{nota}

Figure~\ref{fig:table_actions} gives an overview to all the group actions we are considering.

\begin{figure}[hbt]

\begin{tabular}{| lllll | ll |}
\hline
 & Group & acting on & via & properties & image $\cong_T$ & properties \\
\hline
(i) & $\G$ & $\bigcup_{i\in \omega}\G/\G_i$ & $\tau$ & faithful, cont. & $\G$ & closed\\
(ii) & $\G/\F$ & $\bigcup_{i\in \omega}\G/\G_i$ &$\tau\circ\kappa$& faithful,  discont. & $\F'$ & non-closed \\
(iii) & $\Sigma_{\G/\F}$ & $\bigcup_{i\in \omega}\G/\G_i$ &$\tau\circ\kappa\circ \mu_{\G/\F}$& discontinuous & $\F'$ &non-closed \\
(iv) & $\Sigma_{\G/\F}$ & $\bigcup_{i \geq 1}\G/\G_i$ &restr.~of (iii)& continuous & $\G/\F$ & closed \\
(v) & $\Sigma_{\G/\F}$ & $A \cup \G/\G_0$ &$\chi$& faithful, discont. & $\chi[\Sigma_{\G/\F}]$ &oligom., non-closed \\
(vi)  & $\Gamma$ & $A \cup \G/\G_0$ &ext.~of (v)& faithful, cont. & $\Gamma$ & oligom., closed \\
(vii) & $\Gamma$ & $\bigcup_{i\in \omega}\G/\G_i$  &comb.~of (iv), (vi)& continuous & $\G$ & closed\\
\hline
\end{tabular}
\caption{Group actions, some of their properties, and the permutation groups they induce.}
\label{fig:table_actions}
\end{figure}
 
\begin{lem}\label{lem:lifting_2} \
\begin{enumerate}
	\item[(1)] $\Gamma$ is a closed oligomorphic permutation group.
	\item[(2)] $\Gamma$ is the semidirect product $\chi[\Sigma_{\G/\F}] \cdot \Gamma_{(A)}$.
	\item[(3)] $\chi[\Phi]$ is the intersection of the open subgroups of finite index in $\Gamma$.
	\item[(4)] $\Gamma/\chi[\Phi] \cong_T \G$.
	\item[(5)] $\Gamma_{(A)}$ is central in $\Gamma$ and isomorphic to $\F$.
	\item[(6)] $\Gamma \cong \Sigma_{\G/\F} \times \F$.
\end{enumerate}
\end{lem}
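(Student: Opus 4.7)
The plan is to prove the items in the order (1), (2), (5), (6), (4), (3). Parts (1) and (2) are largely formal: $\Gamma$ is closed by its definition as a topological closure, and it is oligomorphic because $\Sigma_{\G/\F}$ is oligomorphic on $A$, the set $\G/\G_0$ is finite, and passing to the closure preserves orbits on finite tuples. For (2), the subgroup $\Gamma_{(A)}$ is the kernel of the restriction map to $\Sym(A)$, hence normal in $\Gamma$, and it intersects $\chi[\Sigma_{\G/\F}]$ only trivially by faithfulness of $\Sigma_{\G/\F}$ on $A$. Given any $g \in \Gamma$, the restriction $g|_A$ lies in the closure of $\Sigma_{\G/\F}|_A$ in $\Sym(A)$, which equals $\Sigma_{\G/\F}$ since the latter is already closed in $\Sym(A)$; so $g|_A = \sigma|_A$ for some $\sigma \in \Sigma_{\G/\F}$, giving the decomposition $g = \chi(\sigma) \cdot (\chi(\sigma)^{-1}g)$.

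The real work is in (5). Writing $\tau_0$ for the component of $\tau$ acting on $\G/\G_0$, I would first establish $\Gamma_{(A)}|_{\G/\G_0} \subseteq \tau_0(\F)$: for any net $\chi(\sigma_s) \to \gamma \in \Gamma_{(A)}$, the restrictions $\sigma_s|_A$ converge to the identity in $\Sym(A)$, so by continuity of $\mu_{\G/\F}$ we have $\mu_{\G/\F}(\sigma_s) \to 1$ in $\G/\F$. Crucially, $\kappa$ is not continuous, but by compactness of $\G$ a subnet of $\kappa(\mu_{\G/\F}(\sigma_s)) \in \F'$ converges in $\G$ to an element projecting to $1 \in \G/\F$, which therefore lies in $\F$; continuity of $\tau$ then forces $\gamma|_{\G/\G_0} \in \tau_0(\F)$. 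Conversely, given $f \in \F$ and a finite $S \subset A$, the image $U_S := \mu_{\G/\F}(\Sigma_{\G/\F,(S)})$ is an open subgroup of $\G/\F$ by openness of $\mu_{\G/\F}$, so its preimage $\widetilde U_S$ in $\G$ is an open subgroup containing $\F$; hence $\widetilde U_S \cap f\G_0$ is a nonempty open set, and density of $\F'$ provides an element of $\F' \cap \widetilde U_S \cap f\G_0$, which lifts to $\sigma_S \in \Sigma_{\G/\F,(S)}$ with $\chi(\sigma_S)|_{\G/\G_0} = \tau_0(f)$. The net $(\chi(\sigma_S))_S$ then has a limit $\gamma \in \Gamma_{(A)}$ realizing $\tau_0(f)$. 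Faithfulness of $\Gamma_{(A)}$ on $\G/\G_0$ together with $\F \cap \G_0 = \{1\}$ now give $\Gamma_{(A)} \cong \F$; centrality in $\Gamma$ follows from centrality of $\F$ in $\G$ upon analyzing the actions on $A$ and on $\G/\G_0$ separately.

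Part (6) is immediate from (2) and (5), since a semidirect product with a central normal factor is direct. For (4), I would define $\rho \colon \Gamma \to \G$ by $\rho(\chi(\sigma)\gamma) := \kappa(\mu_{\G/\F}(\sigma)) \cdot f$, where $\gamma|_{\G/\G_0} = \tau_0(f)$; this is a homomorphism by centrality of $\F$. Continuity of $\rho$ is obtained by realizing it as the combined action of $\Gamma$ on $\bigcup_{i \in \omega} \G/\G_i$ (row (vii) of Figure~\ref{fig:table_actions}): on $\bigcup_{i \geq 1} \G/\G_i$ the action factors through $\Sigma_{\G/\F}$ and is continuous by Lemma~\ref{lem:representation}(3), while on $\G/\G_0$ it coincides with the continuous action of $\Gamma$ inherited from $\Sym(A \cup \G/\G_0)$. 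The image equals $\F'\F = \G$ and the kernel is $\chi[\Phi]$, so the open mapping theorem for Polish groups yields $\Gamma/\chi[\Phi] \cong_T \G$. Finally, for (3), one inclusion follows from (4) by pulling back the trivial intersection of open finite-index subgroups of the profinite group $\G$ along $\rho$; the other uses that $\chi|_\Phi$ is a topological embedding ($\Phi$ acts trivially on $\G/\G_0$, so its subspace topology in $\Sym(A \cup \G/\G_0)$ coincides with that in $\Sym(A)$), so that the absence of proper open finite-index subgroups of $\Phi$ granted by Proposition~\ref{prop:encoding_1} transfers to $\chi[\Phi]$, forcing $\chi[\Phi] \subseteq N$ for every open finite-index $N \leq \Gamma$. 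The principal obstacle is the surjectivity in (5), which must simultaneously exploit the density of $\F'$, the openness of $\mu_{\G/\F}$, and the finite central nature of $\F$.
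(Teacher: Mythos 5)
Your proposal is correct, but it reverses the paper's logical route through items (3)--(5), and the comparison is worth recording. The paper first constructs the continuous action of $\Gamma$ on $\bigcup_{i\in\omega}\G/\G_i$ (its item (4)), identifies the induced permutation group with $\G$ by quoting Lemma~\ref{lem:representation}(5) (the closure of $\F'$ acting on the coset spaces is $\G$ -- which is where the density of $\F'$ is consumed), and only then reads off $\Gamma_{(A)}\cong\F$ by comparing the stabilizer of $\bigcup_{i\geq 1}\G/\G_i$ in that action, namely $\chi[\Phi]\cdot\Gamma_{(A)}$, with the corresponding stabilizer $\F$ under $\tau$. You instead compute $\Gamma_{(A)}$ head-on as a closure inside $\Sym(A\cup\G/\G_0)$: the inclusion $\Gamma_{(A)}|_{\G/\G_0}\subseteq\tau(\F)|_{\G/\G_0}$ via compactness of $\G$ and continuity of the projection $\G\to\G/\F$, and the reverse inclusion via density of $\F'$ together with openness of $\mu_{\G/\F}$ (your sets $\widetilde U_S\cap f\G_0$ are nonempty because they contain $f$, and normality of $\G_0$ ensures that any $f''\in f\G_0$ induces the same translation on $\G/\G_0$ as $f$). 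With (5) in hand, your explicit formula $\rho(\chi(\sigma)\gamma)=\kappa(\mu_{\G/\F}(\sigma))\cdot f$ gives (4) and then (3) by pulling back the open finite-index subgroups of the profinite group $\G$, which is arguably cleaner than the paper's argument for that inclusion (which goes through openness of $\chi$ and item (2)); your second inclusion in (3), via the observation that $\chi|_\Phi$ is a topological embedding because $\Phi$ fixes $\G/\G_0$, coincides with the paper's. The net effect: your approach makes the use of the hypotheses on $\G$ (density of $\F'$, centrality and finiteness of $\F$, openness of $\mu_{\G/\F}$) fully explicit at the cost of redoing a fragment of Lemma~\ref{lem:representation}(5) inside $\Gamma$, whereas the paper's order is slicker but hides that work in the earlier representation lemma. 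Both arguments are sound.
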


\begin{proof} \ 
\begin{enumerate}
	\item[(1)] $\Gamma$ is closed by definition. As $\Sigma_{\G/\F}$ is oligomorphic on $A$ and $\G/\G_0$ is finite, it follows that $\Gamma$ is oligomorphic. 
	\item[(2)] The restriction function of $\Gamma$ to $A$ is a continuous homomorphism $|_A\colon \Gamma \to \Sigma_{\G/\F}$. Let $g\in \Gamma$ and let $(h_n)_{n \in \omega}$ be a sequence of permutations in $\Sigma_{\G/\F}$ such that $\chi(h_n)$ converges to $g$ in $\Gamma$. Then $(h_n)_{n \in \omega}$ converges in $\Sigma_{\G/\F}$, since $h_n= |_A\circ\chi(h_n) $ for all $n\in\omega$. By $h$ we denote its limit in $\Sigma_{\G/\F}$. The functions $g$ and $h$ are identical on $A$, thus $\chi(h)^{-1} \circ g \in \Gamma_{(A)}$. Moreover, $\chi[\Sigma_{\G/\F}]$ and $\Gamma_{(A)}$ have trivial intersection. Therefore $\Gamma$ is the semidirect product of $\chi[\Sigma_{\G/\F}]$ and $\Gamma_{(A)}$.
	\item[(3)] Note that $\chi$ is open and that it maps subgroups of finite index in $\Sigma_{\G/\F}$ to subgroups of finite index in $\Gamma$ by~(2). Since by Proposition~\ref{prop:encoding_1} the permutation group $\Phi$ is the intersection of the open subgroups of finite index in $\Sigma_{\G/\F}$, we have that $\chi[\Phi]$ contains the intersection of open subgroups of finite index in $\Gamma$. 
	
	For the other inclusion we remark that $\chi[\Phi]$ fixes $\G/\G_0$. Therefore the restriction of $\chi$ to $\Phi$ is continuous. If now $\chi[\Phi]$ had a proper open subgroup of finite index, then its preimage under $\chi$ would be open and of finite index in $\Phi$. Because of Proposition~\ref{prop:encoding_1} it would be equal to $\Phi$, a contradiction.
		
%All the orbits of this action are finite. Thus every stabilizer of a finite subset of $\bigcup_{i \in\omega} \G/\G_i$ is an open subgroup of $\Gamma$  of finite index. As $\chi[\Phi]$ fixes all of $\bigcup_{i \in \omega} \G/\G_i$, the intersection of all open subgroups of finite index is contained in $\chi[\Phi]$.
%For the other inclusion we remark that $\chi[\Phi]$ is closed in $\Gamma$, since it fixes $\G/\G_0$. Therefore the restriction of $\chi$ to $\Phi$ is continuous. If now $\Theta$ is open and of finite index in $\Gamma$, then $i^{-1}(i(\Phi) \cap \Theta)$ is open and of finite index in $\Phi$. Because of Proposition~\ref{prop:encoding_1} it is equal to $\Phi$ and so $\Theta \geq i(\Phi)$.
	\item[(4)] By considering $\mu_{\G/\F}\circ |_{A}$ we get a continuous surjective homomorphism of $\Gamma$ onto $\G/\F$. This gives us a continuous action of $\Gamma$ on $\bigcup_{i \geq 1} \G/\G_i$, by further composing with the mapping $\tau\circ \kappa$. By additionally letting $\Gamma$ act on $\G/\G_0$ by restriction of its domain we get a continuous action of $\Gamma$ on $\bigcup_{i \in \omega} \G/\G_i$ (Item~(vii) in Figure~\ref{fig:table_actions}). 
	
	It is easily verified that $\chi[\Phi]$ is the kernel of the action of $\Gamma$ on $\bigcup_{i \in \omega} \G /\G_i$. So $\Gamma / \chi[\Phi]$ is topologically isomorphic to the permutation group that $\Gamma$ induces on ${\bigcup_{i \in \omega} \G/\G_i}$ via this action. By the definition of the action, if we consider its restriction to 	$\chi[\Sigma_{\G/\F}]$, then it induces the same permutation group on $\bigcup_{i \in \omega} \G/\G_i$ as the action of $\G/\F$ on $\bigcup_{i \in \omega} \G/\G_i$ -- this permutation group is, by Lemma~\ref{lem:representation}, topologically isomorphic to $\F'$. Since the action of $\Gamma$ is continuous and $\Gamma$ is the topological closure of $\chi[\Sigma_{\G/\F}]$ we get that the permutation group it induces is topologically isomorphic to the closure of the action of $\F'$ on $\bigcup_{i \in \omega} \G /\G_i$, which is in turn topologically isomorphic to $\G$. In conclusion we get that $\Gamma/\chi[\Phi] \cong_T \G$.
	\item[(5)] In the action of $\Gamma$ on $\bigcup_{i \in \omega} \G /\G_i$ from~(4), the stabilizer of $\bigcup_{i \geq 1} \G /\G_i$ consists precisely of the elements of $\chi[\Phi] \cdot \Gamma_{(A)}$; this follows from~(2) and the definition of the action. Since the permutation group induced by this action on $\bigcup_{i \in \omega} \G /\G_i$ coincides with the permutation group induced by the action $\tau$ of $\G$ on this set, and since the stabilizer of $\bigcup_{i \geq 1} \G /\G_i$ in the latter action is isomorphic to $\F$, we get that $\chi[\Phi] \cdot \Gamma_{(A)}$, factored by the kernel $\chi[\Phi]$, is isomorphic to $\F$. Hence, $\Gamma_{(A)}$ is isomorphic to $\F$. As $\F$ is a central subgroup of $\G$, $\Gamma_{(A)}$ is central in $\Gamma$.
	\item[(6)] Since $\Gamma_{(A)}$ is a central normal subgroup, the semidirect product in (2) is a direct product. We conclude that, as groups:
	\[ \Gamma = \chi[\Sigma_{\G/\F}] \cdot \Gamma_{(A)} \cong \Sigma_{\G/\F} \mult \F.\]

\end{enumerate}
\end{proof}

\begin{nota} \label{310}
Let $\Delta$ be any closed oligomorphic permutation group on a countable set which is topologically isomorphic to $\Sigma_{\G/\F}\mult\F$. The existence of $\Delta$ follows from the fact that $\Sigma_{\G/\F}$ is itself such a group and that $\F$ is finite.
\end{nota}

\begin{cor}\label{cor:groups}
The closed oligomorphic permutation groups $\Delta$ and $\Gamma$ are isomorphic, but not topologically isomorphic.
\end{cor}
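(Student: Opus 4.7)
The abstract isomorphism $\Gamma\cong\Delta$ is immediate: Notation~\ref{310} gives $\Delta\cong_T\Sigma_{\G/\F}\mult\F$ (hence $\Delta\cong\Sigma_{\G/\F}\mult\F$ as abstract groups), while Lemma~\ref{lem:lifting_2}(6) gives $\Gamma\cong\Sigma_{\G/\F}\mult\F$ as abstract groups; composing yields $\Gamma\cong\Delta$.

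For the topological non-isomorphism I would argue by contradiction. Suppose $\Gamma\cong_T\Delta\cong_T\Sigma_{\G/\F}\mult\F$. The plan is to exploit that the intersection of all open subgroups of finite index is a topological invariant of any topological group. On the $\Gamma$ side, Lemma~\ref{lem:lifting_2}(3) identifies this intersection as $\chi[\Phi]$. I claim that on the product side it equals $\Phi\mult\{1\}$. Assuming the claim, any topological isomorphism must identify these two subgroups, and passing to quotients one would obtain, using Lemma~\ref{lem:lifting_2}(4), Proposition~\ref{prop:encoding_1}, and the fact that a quotient of a product by a product of normal subgroups is the product of the quotients,
\[ \G \;\cong_T\; \Gamma/\chi[\Phi] \;\cong_T\; (\Sigma_{\G/\F}\mult\F)/(\Phi\mult\{1\}) \;\cong_T\; (\Sigma_{\G/\F}/\Phi)\mult\F \;\cong_T\; (\G/\F)\mult\F, \]
contradicting Lemma~\ref{lem:profinite_2}.

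The main obstacle is to establish the claim about $\Sigma_{\G/\F}\mult\F$. Since $\F$ is finite and therefore discrete, $\Sigma_{\G/\F}\mult\{1\}$ is open of finite index in the product; combined with the open finite-index subgroups $U\mult\F$ coming from open finite-index $U\leq\Sigma_{\G/\F}$ (which by Proposition~\ref{prop:encoding_1} collectively intersect in $\Phi\mult\F$), this forces the intersection of open finite-index subgroups into $(\Sigma_{\G/\F}\mult\{1\})\cap(\Phi\mult\F)=\Phi\mult\{1\}$. The reverse inclusion is the slightly delicate point, since open finite-index subgroups of a product need not be direct products: however, for any open finite-index subgroup $W$ of $\Sigma_{\G/\F}\mult\F$, the slice $\{g\in\Sigma_{\G/\F}:(g,1)\in W\}$ is an open subgroup of $\Sigma_{\G/\F}$ of index at most $[\Sigma_{\G/\F}\mult\F:W]$, and hence contains $\Phi$ by Proposition~\ref{prop:encoding_1}, which gives $\Phi\mult\{1\}\subseteq W$ as required.
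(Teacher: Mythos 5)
Your proof is correct and takes essentially the same route as the paper: the abstract isomorphism comes from composing Lemma~\ref{lem:lifting_2}~(6) with Notation~\ref{310}, and the topological non-isomorphism from the fact that the intersection of the open finite-index subgroups is a topological invariant, equal to $\chi[\Phi]$ in $\Gamma$ and to $\Phi\mult\{1\}$ in $\Sigma_{\G/\F}\mult\F$, so that any topological isomorphism would descend to a topological isomorphism of the quotients $\G$ and $(\G/\F)\mult\F$, contradicting Lemma~\ref{lem:profinite_2}. The only difference is that you verify in detail (via the slice argument) that the intersection on the product side is $\Phi\mult\{1\}$, a point the paper asserts with only the phrase ``and hence also in $\Sigma_{\G/\F}\mult\F$''.
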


\begin{proof}
As we have seen in Lemma~\ref{lem:lifting_2}~(6), $\Delta$ and $\Gamma$ are isomorphic as groups. Recall that $\chi[\Phi]$ is the intersection of the  open subgroups of finite index in $\Gamma$, by Lemma~\ref{lem:lifting_2}~(3). By Proposition~\ref{prop:encoding_1}, $\Phi$ is the intersection of the open subgroups of finite index in $\Sigma_{\G/\F}$, and hence also in $\Sigma_{\G/\F}\mult \F$. Thus any topological isomorphism from $\Gamma$ to $\Delta$
 sends $\chi[\Phi]$ onto $\Phi$, and hence induces a topological isomorphism between the quotients $\Gamma / \chi[\Phi] \cong_T \G$ and $(\Sigma_{\G/\F} / \Phi) \times \F \cong_T \G/\F \times \F$, which is a contradiction to Lemma~\ref{lem:profinite_2}.
\end{proof}

\subsection{Extending the isomorphism to the closures of the groups}\label{sect:monoids}

\begin{nota}
For a permutation group $\Theta$, we denote by $\overline{\Theta}$ the topological closure of $\Theta$ in the space of all transformations on its domain, equipped with the topology of pointwise convergence.
\end{nota}
Note that the elements of $\overline{\Theta}$ are precisely the elementary embeddings to itself of any structure whose automorphism group is $\Theta$. Our aim in this section is to show that the monoids $\overline{\Delta}$ and $\overline{\Gamma}$ are isomorphic, but not topologically isomorphic. It is clear that $\overline{\Delta}$ and $\overline{\Gamma}$ are not topologically isomorphic, since the subgroups of invertible elements $\Delta$ and $\Gamma$ are not. It is harder to show that they are isomorphic, since there seems to be no obvious way to carry it over from the permutation groups, the problem being the non-continuity of the isomorphism. We therefore need to further study the topological monoids $\overline{\Delta}$ and $\overline{\Gamma}$ and how they are related to the profinite group $\G$.

%We start with the encoding of the profinite group $\G/\F$ in $\overline{\Sigma}$.

\begin{lem}\label{lem:lifting_monoids1}
Let $\R$ be any separable profinite group. The continuous homomorphism $\mu_\R\colon \Sigma_\R \to \Sigma_\R/\Phi \cong_T \R$ extends to a continuous monoid homomorphism $\overline{\mu_\R}\colon \overline{\Sigma_\R} \to \R$.
\end{lem}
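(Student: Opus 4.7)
The plan is to exploit the explicit description of $\mu_\R$ from the proof of Proposition~\ref{prop:encoding_1}: there, $\mu_\R$ was defined as the map induced by the natural action of $\Sigma_\R$ on the disjoint union $\bigcup_{n\geq 1}X_n$ of the finite sets $X_n=\{P_i^n:1\leq i\leq n\}$. Accordingly, I would extend this induced action verbatim from $\Sigma_\R$ to its closure $\overline{\Sigma_\R}$, and then verify the required algebraic and topological properties.

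The first step would be the structural observation that any $f\in\overline{\Sigma_\R}$, being a pointwise limit of automorphisms of $(A,(E^n)_{n\geq 1})$, is injective and preserves each $E^n$ together with its complement on $n$-tuples of distinct entries; indeed, on every finite subset of $A$ it coincides with some automorphism from $\Sigma_\R$, which has all these properties. Consequently $f$ maps every class $P_i^n$ into a uniquely determined class, and the resulting map on the finite set $X_n$ is injective, hence bijective. Collecting these permutations over all $n$ yields a well-defined element $\overline{\mu_\R}(f)\in\prod_{n\geq 1}\Sym(n)$, and functoriality of ``acting on the quotient sets'' makes $\overline{\mu_\R}$ into a monoid homomorphism extending $\mu_\R$.

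For continuity, I would argue that a basic open neighborhood of $\overline{\mu_\R}(f)$ in $\prod_n\Sym(n)$ prescribes the image of finitely many classes $P_{i_1}^{n_1},\ldots,P_{i_k}^{n_k}$; after fixing a representative tuple $\bar a_j$ in each $P_{i_j}^{n_j}$, the subset of $\overline{\Sigma_\R}$ consisting of transformations that agree with $f$ on the entries of all the $\bar a_j$ is a basic open neighborhood of $f$ that lands in the prescribed open set. Finally, to see that $\overline{\mu_\R}(f)$ actually lies in $\R$ rather than merely in $\prod_n\Sym(n)$, I would pick any sequence $(g_m)_{m\in\omega}$ in $\Sigma_\R$ converging to $f$ pointwise; the same continuity argument yields $\mu_\R(g_m)\to\overline{\mu_\R}(f)$ in $\prod_n\Sym(n)$, and since $\R$ is a closed (indeed compact) subgroup of $\prod_n\Sym(n)$, the limit must lie in $\R$. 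The only place where any real care is required is the opening structural remark, and this follows from the standard fact that the closure of a subgroup of $\Sym(A)$ in $A^A$ consists of transformations that locally agree with group elements; no genuine obstacle is expected here, so the bulk of the work has already been done in the construction underlying Proposition~\ref{prop:encoding_1}.
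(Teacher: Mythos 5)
Your proposal is correct and follows essentially the same route as the paper: both extend the action of $\Sigma_\R$ on $\bigcup_{n\geq 1}X_n$ to $\overline{\Sigma_\R}$ by noting that elements of the closure agree on finite sets with elements of $\Sigma_\R$, hence preserve each $E^n$ and its negation and so induce permutations of the finite sets $X_n$. Your additional checks (injectivity forcing bijectivity on each $X_n$, the explicit continuity argument, and closedness of $\R$ in $\prod_n\Sym(n)$ guaranteeing the image lands in $\R$) are details the paper leaves implicit, but the underlying argument is the same.
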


\begin{proof}
%As in the proof of Proposition \ref{prop:encoding_1}, we only have to consider the case $\R = \prod_{n \geq 1} \Sym(n)$. 
%Recall that $\Phi$ is the kernel 
Recall that $\mu_\R$ was obtained via the action of $\Sigma_\R$ on $\bigcup_{n \geq 1} X_n$, where $X_n$ consists of the equivalence classes of the relations $E^n$. Every element of $\overline{\Sigma_\R}$ agrees on every finite set with an element of $\Sigma_\R$. Therefore the functions in $\overline{\Sigma_\R}$ preserve the equivalence relations $E^n$ and their negations for $n \geq 1$. Since every such relation has only finitely many equivalence classes, every element of  $\overline{\Sigma_\R}$ induces a permutation on them. This action of $\overline{\Sigma_\R}$ on $\bigcup_{i \geq 1} X_i$ extends the action of $\Sigma_\R$ and gives us the continuous monoid homomorphism $\overline{\mu_\R}$.
\end{proof}

Recall the discontinuous action of $\Sigma_{\G/\F}$ on the cosets $\bigcup_{i \in \omega} \G/\G_i$ via the mapping $\tau\circ\kappa\circ\mu_{\G/\F}$ (Item~(iii) in Figure~\ref{fig:table_actions}). With the help of $\overline{\mu_{\G/\F}}$ we see that this action has a natural extension to $\overline{\Sigma_{\G/\F}}$. As before, the restriction of this action to $\bigcup_{i \geq 1} \G/\G_i$ is continuous, and the induced permutation group is isomorphic to $\G/\F$. It is with the action on $\G/\G_0$ that we lose the continuity. 

By composing the continuous function $\overline{\mu_{\G/\F}} \circ |_A\colon \overline{\Gamma} \to \G/\F$ with the continuous action $\tau\circ\kappa$ of $\G/\F$ on $\bigcup_{i \geq 1} \G/\G_i$, we obtain a continuous action of $\overline{\Gamma}$ on $\bigcup_{i \geq 1} \G/\G_i$. By additionally letting $\overline{\Gamma}$ act on $\G/\G_0$ by restriction, we get a continuous action of $\overline{\Gamma}$ on $\bigcup_{i \in \omega}\G/\G_i$ which extends the action of $\Gamma$ thereon. 

Similarly to the situation with $\Sigma_{\G/\F}$, we can let $\overline{\Sigma_{\G/\F}}$ act on $A \cup \G/\G_0$, inducing an embedding $\overline{\chi}$ of $\overline{\Sigma_{\G/\F}}$ into the set of all transformations on $A \cup \G/\G_0$ which extends the group embedding $\chi$ from Lemma \ref{lem:lifting_2}.

\begin{lemma} \label{lem:lifting_monoids2} \
\begin{itemize}
	\item[(1)] $\overline{\Gamma}=\overline{\chi[\Sigma_{\G/\F}]}$.
	\item[(2)] All elements of  $\overline{\Gamma}$ which stabilize (pointwise) $A$ are invertible. Hence, $\overline{\Gamma}_{(A)} = \Gamma_{(A)}$.
	\item[(3)] The action of $\overline{\Gamma}$ on $\bigcup_{i \in \omega}\G/\G_i$ induces a permutation group that is equal  to $\G$.
	\item[(4)] $\overline{\Gamma}$ is isomorphic to the monoid direct product $\overline{\Sigma_{\G/\F}}\times \F$.
\end{itemize}
\end{lemma}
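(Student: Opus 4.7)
The plan is to handle (1) and (2) as routine topological observations about closures and the finiteness of $\G/\G_0$, part (3) via the continuity of the action described just before the lemma combined with compactness of $\G$, and part (4), the main obstacle, by lifting the direct product decomposition from Lemma~\ref{lem:lifting_2} to limits, exploiting finiteness of $\F$. For (1), the inclusion $\chi[\Sigma_{\G/\F}]\subseteq\Gamma$ gives $\overline{\chi[\Sigma_{\G/\F}]}\subseteq\overline{\Gamma}$ at once, while $\Gamma$ is by definition the closure of $\chi[\Sigma_{\G/\F}]$ inside $\Sym(A\cup\G/\G_0)$, hence contained in its closure in $(A\cup\G/\G_0)^{A\cup\G/\G_0}$; closing once more yields the other inclusion. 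For (2), let $g\in\overline{\Gamma}$ fix $A$ pointwise. Since $\G/\G_0$ is finite, $g$ agrees on $\G/\G_0$ with some $h\in\Gamma$ and therefore acts as a permutation of $\G/\G_0$, so $g$ is a bijection of $A\cup\G/\G_0$. Approximating $g$ by $(g_n)\subseteq\Gamma$ in the topology of pointwise convergence, discreteness of the underlying set forces $g_n^{-1}(y)=g^{-1}(y)$ eventually for each $y$, so the convergence actually takes place in the Polish group topology on $\Sym(A\cup\G/\G_0)$; since $\Gamma$ is closed there, $g\in\Gamma_{(A)}$.

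For (3), continuity of the action of $\overline{\Gamma}$ on $\bigcup_{i\in\omega}\G/\G_i$ constructed before the lemma implies that the image of $\overline{\Gamma}$ is contained in the closure, inside the monoid of all transformations, of the image of $\Gamma$. By Lemma~\ref{lem:lifting_2}(4) the latter image is $\G$, and $\G$ is profinite, hence compact, so its faithful image under $\tau$ is closed in that monoid. Together with the trivial reverse inclusion, $\overline{\Gamma}$ therefore induces exactly $\G$.

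The main obstacle is (4). The strategy is to construct a bijective monoid homomorphism $(s,f)\mapsto\overline{\chi}(s)\cdot t_f$ from $\overline{\Sigma_{\G/\F}}\times\F$ onto $\overline{\Gamma}$, where $t_f\in\Gamma_{(A)}$ corresponds to $f\in\F$ via Lemma~\ref{lem:lifting_2}(5). Centrality of $\Gamma_{(A)}$ inside $\overline{\Gamma}$ follows from its centrality in $\Gamma$ by continuity of composition and density of $\Gamma$ in $\overline{\Gamma}$, which makes the above map a monoid homomorphism; part~(2) together with restriction to $A$ then gives its injectivity. The crux is surjectivity: given $g\in\overline{\Gamma}$, approximate $g=\lim g_n$ with $g_n=\chi(s_n)\cdot t_n\in\Gamma$ using Lemma~\ref{lem:lifting_2}(2). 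Since $\Gamma_{(A)}\cong\F$ is finite, one passes to a subsequence along which $t_n$ is a constant $t\in\Gamma_{(A)}$. Setting $s:=g|_A$, continuity of the restriction $|_A$ yields $s_n\to s\in\overline{\Sigma_{\G/\F}}$, and continuity of $\overline{\chi}$ (whose action on $\G/\G_0$ is built from the continuous $\overline{\mu_{\G/\F}}$ of Lemma~\ref{lem:lifting_monoids1}) gives $\chi(s_n)\to\overline{\chi}(s)$; hence $g=\overline{\chi}(s)\cdot t$ by continuity of composition. The finiteness of $\F$ is precisely what permits the subsequence extraction and is the decisive technical point that allows the direct product decomposition of $\Gamma$ to survive the passage to the monoid closure.
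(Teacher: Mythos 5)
Parts (1)--(3) of your proposal are fine and essentially match the paper (your compactness argument for (3) is a harmless variant of the paper's finite-orbits argument). The problem is in (4), at exactly the point you call the crux. Your surjectivity step rests on the claim that $\overline{\chi}$ is continuous because its action on $\G/\G_0$ ``is built from the continuous $\overline{\mu_{\G/\F}}$''. This is false, and in fact contradicts the central point of the whole construction: the action of $\overline{\Sigma_{\G/\F}}$ on $\G/\G_0$ is $\tau\circ\kappa\circ\overline{\mu_{\G/\F}}$, and while $\overline{\mu_{\G/\F}}$ is continuous, $\kappa\colon\G/\F\to\F'$ is the \emph{discontinuous} isomorphism of Notation~\ref{nota:profinite_3}. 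Concretely, the induced homomorphism $\G/\F\to\Sym(\G/\G_0)$ has kernel $\kappa^{-1}(\F'_0)$, which is a non-open finite-index subgroup (the text preceding the lemma states explicitly that $\F'_0$ is the one non-open $\F'_i$, and that ``it is with the action on $\G/\G_0$ that we lose the continuity''). So from $s_n\to s$ you may conclude that $\chi(s_n)$ converges (indeed to $g\cdot t^{-1}$, by continuity of composition), but you cannot conclude that the limit is $\overline{\chi}(s)$: the permutations of $\G/\G_0$ induced by the $\chi(s_n)$ need not stabilize, let alone converge to the one induced by $\overline{\chi}(s)$. What remains to be shown is precisely that $g\cdot t^{-1}$ and $\overline{\chi}(s)$ differ by an element of $\Gamma_{(A)}$, which is the content of (4) itself, so the argument is circular at this point.

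The paper closes this gap by using part (3) rather than any continuity of $\overline{\chi}$: one first reduces (via some $h\in\Sigma_{\G/\F}$ matching $g$ on $\bigcup_{i\geq 1}\G/\G_i$) to the case where $g$ fixes $\bigcup_{i\geq 1}\G/\G_i$ pointwise; then (3) provides $g'\in\Gamma$ inducing the same permutation as $g$ on \emph{all} of $\bigcup_{i\in\omega}\G/\G_i$, and the decomposition $g'=e\cdot f$ from Lemma~\ref{lem:lifting_2}(2) forces $e\in\chi[\Phi]$, hence $e$ is the identity on $\G/\G_0$ and $f\in\Gamma_{(A)}$ agrees with $g$ there, giving $g=\overline{\chi}(g\rest_A)\cdot f$. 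The structural input that pins down how $g$ acts on $\G/\G_0$ is thus the closedness of the induced group $\G$ on the full coset space, not continuity of $\overline{\chi}$. Your subsequence extraction using finiteness of $\F$ is not needed and does not substitute for this step. The homomorphism and injectivity parts of your (4) (centrality of $\Gamma_{(A)}$ in $\overline{\Gamma}$ by density, injectivity via restriction to $A$ and part (2)) are correct.
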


\begin{proof} \ 
\begin{itemize}
\item[(1)] $\Gamma$ was defined as the topological closure of $\chi[\Sigma_{\G/\F}]$ in $\Sym(A \cup \G/\G_0)$, so this is immediate. 
%Note that the restriction of $\overline{\Gamma}$ to $A$ is a continuous surjective homomorphism on $\overline{\Sigma}$.

\item[(2)] The functions in $\overline{\Gamma}$ are injective, so by finiteness of $\G/\G_0$ any element of $\overline{\Gamma}$ which fixes all points of $A$ is bijective. 

\item[(3)] The action of $\Gamma$ on $\bigcup_{i \in \omega}\G/\G_i$ induces a permutation group that is topologically isomorphic to $\G$, by Lemma~\ref{lem:lifting_2}~(4). The action of $\overline{\Gamma}$ on $\bigcup_{i \in \omega}\G/\G_i$ extends this action. Since all permutations induced by the action of $\Gamma$ have only finite orbits, and since the action of $\overline{\Gamma}$ is continuous, every element of $\overline{\Gamma}$ actually induces a permutation on $\bigcup_{i \in \omega}\G/\G_i$. Every such permutation is in turn already induced by the action of $\Gamma$, since the permutation group induced by this action is closed. Summarizing, the functions induced by the two actions coincide, and induce a permutation group which is topologically isomorphic to $\G$.

\item[(4)] Let $g \in \overline{\Gamma}$, and assume first that $g$ fixes $\bigcup_{i \geq 1}\G/\G_i$ pointwise. So $g\rest_A \in \overline{\Sigma_{\G/\F}}$ fixes $\bigcup_{i \geq 1} \G/\G_i$ and $\overline{\chi}(g\rest_A)$ is the identity on $\G/\G_0$. Note that $\overline{\chi}(g\rest_A)$ agrees with $g$ on $A$ (but $g$ may be non-identity on $\G/\G_0$). 

By (3), there is $g' \in \Gamma$ which agrees with $g$ on $\bigcup_{i\in \omega} \G/\G_i$. By \ref{lem:lifting_2} we can write $g' = e\cdot f$ where $e \in \chi(\Sigma_{\G/\F})$ and $f \in \Gamma_{(A)}$. As $f, g'$ fix all of $\bigcup_{i\geq 1} \G/\G_i$, the same is true of $e$. So $e \in \chi(\Phi)$ and therefore $e$ fixes all of $\G/\G_0$. Thus $f \in \Gamma_{(A)}$ agrees with $g'$ and therefore with $g$ on $\bigcup_{i \in \omega} \G/\G_i$. So $g = \overline{\chi}(g\rest_A)\cdot f$ (as $g$ agrees with $\overline{\chi}(g\rest_A)$ on $A$ and $f$ fixes all of $A$). 

Now let $g$ be arbitrary. There exists $h\in \Sigma_{\G/\F}$ such that $g$ and $h$ agree in their action on $\bigcup_{i \geq 1}\G/\G_i$, by (3). Then by the preceding case, $\chi(h)^{-1}\circ g$ is contained in $\overline{\chi}[\overline{\Sigma_{\G/\F}}]\cdot \Gamma_{(A)}$, and hence so is $g$.

Clearly $\overline{\chi}[\overline{\Sigma_{\G/\F}}]\cap \Gamma_{(A)}$ is the trivial group and $\overline{\chi}$ is a monoid isomorphism from $\overline{\Sigma_{\G/\F}}$ to its image. As $\Gamma_{(A)} \cong \F$, we have the result.

%the restriction $g\rest_A$ of $g$ to $A$ is an element of $\overline{\Phi}$; this is the case if and only if $g$ stabilizes $\bigcup_{i \geq 1}\G/\G_i$ pointwise in the action of $\overline{\Gamma}$ on this set. We then have that $\overline{\chi}(g\rest_A)$ agrees with $g$ on $A$; on the other hand, on $\G/\G_0$ we have that $\overline{\chi}(g\rest_A)$ is the identity while $g$ might be non-trivial on this set. By~(3), there exists $g'\in\Gamma$ which agrees with $g$ on $\G/\G_0$ and such that $g'\rest_A\in\Phi$: simply pick any element of $\Gamma$ which is identical with $g$ in the action of $\overline{\Gamma}$ on $\bigcup_{i \in \omega}\G/\G_i$.
%From Lemma~\ref{lem:lifting_2} we know that $g'$ has a representation $e\cdot f$ in $\chi[\Sigma_{\G/\F}]\cdot \Gamma_{(A)}$; since $e\in\chi[\Phi]$, it stabilizes $\G/\G_0$, and hence $f\in \Gamma_{(A)}$ agrees with $g'$, and in turn also with $g$, on $\G/\G_0$. Therefore $g$ has a representation as $\overline{\chi}(g\rest_A)\cdot f$ in $\overline{\chi}[\overline{\Sigma_{\G/\F}}]\cdot \Gamma_{(A)}$.
%
%Now let $g$ be arbitrary. There exists $h\in \Sigma_{\G/\F}$ such that $g$ and $h$ agree in their action on $\bigcup_{i \geq 1}\G/\G_i$, by (3). Then by the preceding case, $\chi(h)^{-1}\circ g$ is contained in $\overline{\chi}[\overline{\Sigma_{\G/\F}}]\cdot \Gamma_{(A)}$, and hence so is $g$.
%
%Clearly, these representations of elements of $\overline{\Gamma}$ are unique. Since $\overline{\chi}[\overline{\Sigma_{\G/\F}}]$ and $\overline{\Sigma_{\G/\F}}$ are isomorphic, the statement follows.
\end{itemize}
\end{proof}

Let $\Delta$ be as in Notation \ref{310}. 

\begin{prop}\label{prop:monoids}
The closed transformation monoids $\overline{\Delta}$ and $\overline{\Gamma}$ are isomorphic, but not topologically isomorphic.
\end{prop}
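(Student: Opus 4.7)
The plan is to prove the two claims separately, routing both through the decomposition $\overline{\Gamma}\cong \overline{\Sigma_{\G/\F}}\times \F$ already established in Lemma~\ref{lem:lifting_monoids2}(4).

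For the failure of topological isomorphism, I would exploit that the group of units of a topological monoid, equipped with its subspace topology, is an invariant under topological monoid isomorphisms. For any closed oligomorphic permutation group $\Theta$ on a countable set $X$, the invertible elements of $\overline{\Theta}\subseteq X^X$ are precisely the permutations in $\Theta$, the subspace topology from pointwise convergence on $X^X$ agrees with the topology of $\Theta\leq \Sym(X)$, and inversion is automatically continuous on $\Sym(X)$. Hence any topological monoid isomorphism $\overline{\Delta}\to\overline{\Gamma}$ would restrict to a topological group isomorphism $\Delta\cong_T\Gamma$, contradicting Corollary~\ref{cor:groups}.

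For the abstract isomorphism, I would exploit the freedom granted by Notation~\ref{310} and pick $\Delta$ concretely as the componentwise action of $\Sigma_{\G/\F}\times \F$ on the countable disjoint union $A\sqcup \F$, where $\Sigma_{\G/\F}$ acts on its usual domain $A$ and $\F$ acts on itself by left multiplication. This action is faithful, oligomorphic (using finiteness of $\F$), and closed in $\Sym(A\sqcup \F)$, and as a topological group it is isomorphic to $\Sigma_{\G/\F}\times \F$, as required. Any element of $\overline{\Delta}\subseteq (A\sqcup \F)^{A\sqcup \F}$ restricts on $A$ to an element of $\overline{\Sigma_{\G/\F}}$ and on $\F$ to an element of $\F^\F$ which must lie in $\F$ by finiteness; conversely, every such pair extends to an element of $\overline{\Delta}$. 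This yields a monoid isomorphism $\overline{\Delta}\cong \overline{\Sigma_{\G/\F}}\times \F$, and composing with Lemma~\ref{lem:lifting_monoids2}(4) gives the desired abstract isomorphism $\overline{\Delta}\cong\overline{\Gamma}$.

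The main obstacle, as the authors flag just before Lemma~\ref{lem:lifting_monoids1}, is that the abstract group isomorphism $\Delta\cong\Gamma$ from Corollary~\ref{cor:groups} is not continuous, so it does not directly extend to a map between the closures in $X^X$: a pointwise-convergent sequence in $\Delta$ need not be mapped to a pointwise-convergent sequence in $\Gamma$. The technical bypass has already been carried out in Lemma~\ref{lem:lifting_monoids2}(4), where the non-continuous content of $\Delta\cong\Gamma$ is absorbed into the finite factor $\F$; once that decomposition is in hand, realizing $\overline{\Delta}$ as the same product via a concrete choice of $\Delta$ is routine and exploits only the finiteness of $\F$.
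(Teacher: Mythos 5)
Your proof is correct and follows essentially the same route as the paper's: both obtain the abstract isomorphism by identifying $\overline{\Delta}$ and $\overline{\Gamma}$ with $\overline{\Sigma_{\G/\F}}\times\F$ via Lemma~\ref{lem:lifting_monoids2}(4), and both rule out a topological isomorphism by restricting to the groups of units and invoking Corollary~\ref{cor:groups}. The only difference is minor: where the paper passes from $\Delta\cong_T\Sigma_{\G/\F}\times\F$ directly to $\overline{\Delta}\cong_T\overline{\Sigma_{\G/\F}}\times\F$, you instantiate $\Delta$ concretely as the componentwise action on $A\sqcup\F$, a legitimate use of the freedom granted in Notation~\ref{310}.
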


\begin{proof}
The group $\Delta$ is topologically isomorphic to $\Sigma_{\G/\F} \times \F$, thus $\overline{\Delta}$ is topologically isomorphic to $\overline{\Sigma_{\G/\F}}\times\F$. By Lemma~\ref{lem:lifting_monoids2} $\overline{\Gamma}$ is isomorphic to $\overline{\Sigma_{\G/\F}}\times\F$, so $\overline{\Delta}$ and $\overline{\Gamma}$ are isomorphic. If they were topologically isomorphic, then also the  groups of invertible elements, equal to $\Delta$ and $\Gamma$ respectively, would be topologically isomorphic. But this contradicts Corollary~\ref{cor:groups}.
\end{proof}

\subsection{Extending the isomorphism to the function clones}

When $\Delta$ is any set of finitary functions on a given set, then there exists a smallest function clone containing it, the \emph{function clone generated by $\Delta$}. In the special case where $\Delta$ is a transformation monoid, this clone consists precisely of those functions which arise by adding dummy variables to the functions of the monoid. In this case, if $\Delta$ is topologically closed, then so is the function clone generated by $\Delta$. Thus moving from a $\Delta$ to the clone it generates is an algebraic procedure, in contrast to the moving from a closed permutation group to its topological closure as a transformation monoid, which is topological. It is therefore much more straightforward to extend non-topological isomorphisms between closed transformation monoids to the clones they generate. The following proposition is easy, its proof can be found in \cite{Reconstruction}.

\begin{prop}\label{prop:extending2clones}
Let $\Sigma, \Lambda$ be transformation monoids, and let $\xi\colon \Sigma\To\Lambda$ be a monoid isomorphism such that both $\xi$ and its inverse function send constant functions to constant functions. Then $\xi$ extends to an isomorphism between  the function clones generated by $\Sigma$ and $\Lambda$.
\end{prop}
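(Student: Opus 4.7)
The plan is to define the natural candidate extension $\tilde\xi$, check that it is well defined, verify that it is a clone homomorphism, and conclude invertibility by symmetry.

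First, I would recall that the clone generated by a transformation monoid $\Sigma$ on a set $X$ consists exactly of the operations of the form $f(x_1,\dots,x_n)=s(x_i)$ with $s\in\Sigma$ and $1\le i\le n$; write such an $f$ as $s\circ\pi_i^n$. If $s$ is non-constant, both $s$ and $i$ are recovered from $f$: one has $s(x)=f(x,\dots,x)$, and $i$ is the unique essential argument of $f$. If $s$ is a constant function with value $a\in X$, then $f$ is simply the $n$-ary constant with value $a$ regardless of $i$, so the representation $s\circ\pi_i^n$ is highly non-unique in this case.

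Next, I would define $\tilde\xi$ on the clone generated by $\Sigma$ by $\tilde\xi(s\circ\pi_i^n):=\xi(s)\circ\pi_i^n$. For non-constant $s$ this is unambiguous by the previous paragraph. For constant $s$, the hypothesis ensures that $\xi(s)$ is itself constant, say with value $b$, and then $\xi(s)\circ\pi_i^n$ is the $n$-ary constant with value $b$, independently of $i$ and independently of which particular unary constant function with the same value as $s$ was chosen. Hence $\tilde\xi$ is well defined.

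I would then verify the clone-homomorphism identities. Projections are preserved because $\pi_i^n=\id\circ\pi_i^n$ and $\xi(\id)=\id$. For composition, if $f=s\circ\pi_i^n$ and $g_j=t_j\circ\pi_{k_j}^m$ for $j=1,\dots,n$, a direct computation gives $f(g_1,\dots,g_n)=(s\circ t_i)\circ\pi_{k_i}^m$, whence
\[ \tilde\xi\bigl(f(g_1,\dots,g_n)\bigr)=\xi(s\circ t_i)\circ\pi_{k_i}^m=\xi(s)\circ\xi(t_i)\circ\pi_{k_i}^m, \]
and expanding $\tilde\xi(f)(\tilde\xi(g_1),\dots,\tilde\xi(g_n))$ yields the same expression; only the fact that $\xi$ is a monoid homomorphism is used.

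Finally, the hypothesis is symmetric in $\xi$ and $\xi^{-1}$, so the same construction produces an extension $\widetilde{\xi^{-1}}$, which by a check on generators $s\circ\pi_i^n$ is two-sided inverse to $\tilde\xi$. The only real point in the argument is well-definedness on the constant operations, which is precisely what the hypothesis on constant functions is designed to guarantee; without it the naive definition would fail to yield a function at all.
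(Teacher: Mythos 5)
Your proof is correct. Note that the paper does not actually prove this proposition --- it only remarks that it is easy and cites \cite{Reconstruction} --- so there is no in-paper argument to compare against; your write-up supplies the standard argument that the citation presumably contains. You correctly identify the one non-trivial point: every element of the clone generated by a transformation monoid has the form $s\circ\pi_i^n$, with $s$ always recoverable from the diagonal and $i$ recoverable exactly when $s$ is non-constant, so the only obstruction to well-definedness of $\tilde\xi(s\circ\pi_i^n):=\xi(s)\circ\pi_i^n$ is the constant case, which is precisely what the hypothesis on constants rules out; the composition identity $f(g_1,\dots,g_n)=(s\circ t_i)\circ\pi_{k_i}^m$ then reduces the homomorphism property to the monoid homomorphism property of $\xi$, and bijectivity follows by applying the same construction to $\xi^{-1}$.
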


\begin{cor}\label{cor:extending2clones}
The function clones generated by the transformation monoids $\overline{\Delta}$ and $\overline{\Gamma}$ are isomorphic, but not topologically isomorphic.
\end{cor}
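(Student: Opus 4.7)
The plan is to apply Proposition~\ref{prop:extending2clones} to the abstract monoid isomorphism $\xi\colon \overline{\Delta}\to \overline{\Gamma}$ granted by Proposition~\ref{prop:monoids}, and then to use the unary sort of any hypothetical topological clone isomorphism to contradict the topological non-isomorphism of the monoids, also coming from Proposition~\ref{prop:monoids}.

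First, I would observe that a unary function $c$ on a set is constant if and only if $c \circ f = c$ for every unary $f$ in the ambient transformation monoid containing all constants (equivalently, if and only if $c$ is a left-absorbing element of the monoid). Since both $\overline{\Delta}$ and $\overline{\Gamma}$ are closures of oligomorphic permutation groups acting on countably infinite sets, they need not contain constants a priori; however, constants can also be characterised internally as the idempotents $e$ of the monoid such that $e \circ f = e$ for every $f$. In either case, the characterisation is purely in terms of the monoid operation, so $\xi$ and $\xi^{-1}$ must send constants to constants (if any are present). In our setting both closures contain no constants at all, since their elements are injective transformations, so the hypothesis of Proposition~\ref{prop:extending2clones} is vacuously satisfied. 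Applying that proposition yields a clone isomorphism between the clones generated by $\overline{\Delta}$ and $\overline{\Gamma}$.

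For the non-topological part, suppose for contradiction that there were a topological clone isomorphism $\Xi$ between the clones generated by $\overline{\Delta}$ and $\overline{\Gamma}$. The restriction of $\Xi$ to the unary sort is a bijection between $\overline{\Delta}$ and $\overline{\Gamma}$, since the clone generated by a transformation monoid has that monoid as its unary sort (the operations added by the clone closure are all of higher arity, obtained by padding with dummy variables). This restriction is a monoid isomorphism because clone composition restricted to unary arguments coincides with monoid composition, and it is a homeomorphism since the clone topology is the sum topology over the arities and hence induces the pointwise convergence topology on the unary sort. This gives a topological monoid isomorphism between $\overline{\Delta}$ and $\overline{\Gamma}$, directly contradicting Proposition~\ref{prop:monoids}.

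The only mild subtlety, which I would make sure to verify carefully, is that the unary sort of the clone generated by a closed transformation monoid $\Theta$ is exactly $\Theta$ (and not something strictly larger arising from the clone closure), and that the topology induced by the clone's sum topology on this unary sort coincides with the topology of pointwise convergence on $\Theta$. Both are routine from the definitions recalled in Section~\ref{sect:prelims} and from the remark in the paragraph preceding Proposition~\ref{prop:extending2clones} that, for closed $\Theta$, the generated clone is again closed and consists exactly of functions obtained from elements of $\Theta$ by addition of dummy variables.
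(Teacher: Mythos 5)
Your proof is correct and follows essentially the same route as the paper: apply Proposition~\ref{prop:extending2clones} to the monoid isomorphism from Proposition~\ref{prop:monoids}, then restrict any hypothetical topological clone isomorphism to the unary sort to contradict Proposition~\ref{prop:monoids}. Your explicit verification that the constant-preservation hypothesis is vacuously satisfied (all elements of these monoids are injective maps on an infinite set, so neither contains constants) is a detail the paper leaves implicit, and it is correct.
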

\begin{proof}
By Propositions~\ref{prop:monoids} and~\ref{prop:extending2clones}, the clones are isomorphic. Any topological isomorphism between them would yield a topological isomorphism between the monoids $\overline{\Delta}$ and $\overline{\Gamma}$ by restriction to the unary sort, and hence contradict Proposition~\ref{prop:monoids}.
\end{proof}

\subsection{Encoding into a finite relational language} 

We have shown that there are $\omega$-categorical structures $\mathcal A$ and $\mathcal B$ whose
endomorphism monoids are isomorphic, but not topologically isomorphic. The structure $\mathcal A$ has an infinite
signature, and it is easy to see from the theorem of Coquand, Ahlbrandt, and Ziegler~\cite{AhlbrandtZiegler} that any structure $\mathcal A'$ whose automorphism group is topologically isomorphic to the one of $\mathcal A$ must have an infinite signature.  In this section we are going to show that there is an $\omega$-categorical structure in a finite language such that its automorphism group, its endomorphism monoid and its polymorphism clone do not have reconstruction. 

The key ingredient for the counterexamples of the previous sections was Proposition \ref{prop:encoding_1}. It gave us an encoding of the profinite group $\G/\F$ as the quotient of an oligomorphic group $\Sigma_{\G/\F}$ and the intersection of its open subgroups of finite index. Our primary goal in this section is to construct an oligomorphic permutation group $\tilde \Sigma$ that also encodes $\G/\F$ in the above sense and can be written as the automorphism group of a structure with finite signature. %The existence of such a group $\tilde \Sigma$ seems non-trivial, since construction in the proof of Proposition \ref{prop:encoding_1} required an infinite language. In fact, using the result of Ahlbrandt and Ziegler \cite{AhlbrandtZiegler}, one can show that $\Sigma_{\G/\F}$ is not topologically isomorphic to any automorphism group of a structure with finite signature. 
We will obtain $\tilde \Sigma$ with the help of a theorem due to Hrushovski, which states that every $\omega$-categorical structure is definable on a definable subset of an $\omega$-categorical structure with finite signature. In Proposition \ref{prop:finite_1} we present Hrushovski's result and a proof sketch taken from \cite[Theorem 7.4.8]{Hodges} in order to refer to this construction later on.

\begin{prop} \label{prop:finite_1}
Let $\cA$ be a countable $\omega$-categorical structure. Then there is a finite language $L$, containing a 1-ary predicate $P$, and an $\omega$-categorical $L$-structure $\cB$, such that the domain of $\cA$ is equal to the elements of $\cB$ satisfying $P$ and the definable relations of $\cA$ are exactly the definable relations of $\cB$ restricted to $P$.
\end{prop}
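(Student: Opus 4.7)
The plan is to build $\cB$ on the domain $B = A \sqcup T$, where $T$ is a disjoint countable set of ``codes'' for the finite tuples from $A$ (say one code per element of $\bigcup_{n\geq 0} A^n$), and to let $P$ be the unary predicate singling out $A$. The language $L$ will contain $P$ together with just a handful of additional symbols: a binary ``incidence'' relation $I \subseteq T \times A$ linking each code to a single element of $A$, and a binary ``parent'' relation $\sigma \subseteq T \times T$ that organises $T$ as a rooted tree in which the root is the code of the empty tuple and the code of a tuple $(a_1,\dots,a_n)$ sits above the code of $(a_1,\dots,a_{n-1})$ and is $I$-related to $a_n$. Following codes along $\sigma$ therefore lets any $L$-formula recover the sequence of entries of the tuple a code represents.

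Using only $P$, $I$ and $\sigma$, the structure on $T$ is essentially that of a combinatorial tree, and two codes sitting above the same underlying tuple of $A$-elements would be indistinguishable. To make the $\Aut(\cA)$-orbit type of a tuple legible from $\cB$, I would further enrich $T$ following the Fra\"iss\'e-style amalgamation of \cite[Theorem~7.4.8]{Hodges}: one adds finitely many predicates on $T$ (or on $T \times A$) whose interpretation on a code refines its parent's interpretation in a canonical way and which together encode, inductively on the length of the tuple, the $\Aut(\cA)$-orbit to which the represented tuple belongs. Since by Ryll--Nardzewski there are only finitely many orbits of $n$-tuples for each $n$, this refinement can be carried out with a fixed finite signature.

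I would then verify the two properties in the statement. For $\omega$-cat\-e\-goric\-i\-ty of $\cB$, I would invoke Ryll--Nardzewski: every element of $B\setminus A$ is, by construction, pinned down up to $\Aut(\cB)$ by the tuple of $A$-elements lying below it together with the finitely many decorations on its ancestors, so the orbit count on $n$-tuples of $\cB$ reduces to an orbit count on tuples of $\cA$, which is finite. For the relation condition, note that $A = P^{\cB}$ by construction, and that the $\Aut(\cA)$-orbit of a tuple $\bar a$ from $A$ becomes $L$-definable in $\cB$ because it can be read off by an existential quantification over the (essentially unique, up to automorphism) branch of $T$ sitting above $\bar a$; conversely every relation on $P^{\cB}$ defined by an $L$-formula is preserved by $\Aut(\cB)\rest_A \supseteq \Aut(\cA)$, hence is a union of $\Aut(\cA)$-orbits and so is already definable in $\cA$.

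The main obstacle is the balancing act in the previous-to-last paragraph: one must choose the decorations on $T$ strong enough to separate all the $\Aut(\cA)$-orbits on tuples of $A$, yet weak enough that $\cB$ itself is $\omega$-categorical, i.e.\ that no new ``spurious'' orbits of $\cB$-tuples are created. This is exactly the technical core of the Hrushovski encoding, and I would resolve it by reproducing the amalgamation-class construction in the proof sketch of \cite[Theorem~7.4.8]{Hodges}, which produces the $\cB$ above as the Fra\"iss\'e limit of a well-chosen amalgamation class of finite $L$-structures derived from the finite substructures of $\cA$.
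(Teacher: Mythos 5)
Your overall plan --- adjoin to $\cA$ a countable set of new points carrying a fixed finite stock of auxiliary relations, cut out $A$ by the predicate $P$, and realise the whole thing as the \Fresse{} limit of a suitable amalgamation class following \cite[Theorem 7.4.8]{Hodges} --- is exactly the paper's plan. But the specific encoding you describe has a gap that I don't think can be repaired, and it sits precisely at the step you flag as the ``balancing act''. You propose one tree-code per finite tuple of $A$, decorated by finitely many predicates on $T$ (or $T\times A$) which ``encode, inductively on the length of the tuple, the $\Aut(\cA)$-orbit to which the represented tuple belongs'', and you justify the sufficiency of a fixed finite signature by Ryll--Nardzewski. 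That justification does not work: Ryll--Nardzewski gives finitely many orbits of $n$-tuples \emph{for each} $n$, but no uniform bound, and the count can grow faster than $2^{cn^k}$ for every $c$ and $k$. Already for the random $3$-uniform hypergraph, a fixed orbit of injective $(n-1)$-tuples splits into at least $2^{\binom{n-1}{2}}$ orbits of $n$-tuples; for the structure $\cA^*$ of Proposition~\ref{prop:encoding_1}, which is the case this paper actually needs, the growth is far worse. A branch of $n$ codes together with the $n$ elements below it, decorated by a fixed finite family of relations of bounded arity, realises only $2^{O(n^k)}$ distinguishable configurations, so no such decoration scheme can separate all orbits. Your closing appeal to \cite[Theorem 7.4.8]{Hodges} does not close this gap, because the structure Hodges builds is not the decorated tree you describe.

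What the paper (following Hodges/Hrushovski) actually does is sidestep the orbit-counting problem entirely. One first arranges that every definable relation of $\cA$ is atomic, say $R_n$ of arity $l(n)\leq n$, and then witnesses $R_n(\bar a)$ not by a code whose \emph{decorations} name the relation, but by an ``$n$-pair'': $n$ fresh points forming an $H$-cycle of length exactly $n$, with endpoints marked by the unary predicates $\lambda$ and $\rho$, attached to the entries of $\bar a$ (repetitions included) by a single $4$-ary relation $S$. The index $n$ --- and hence which of the infinitely many relations is being witnessed --- is read off from the \emph{size} of the gadget, i.e.\ by an existential formula whose length grows with $n$, rather than from which predicates hold at the gadget. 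This is how a six-symbol language absorbs infinitely many relations of unbounded arity, and it is the idea missing from your proposal. (Your verification of the two conclusions is otherwise in the right shape: $P^{\cB}=A$ by construction, each $R_n$ becomes existentially definable via its gadgets, and conversely $L$-definable relations on $P$ are invariant under a group whose orbits on $P$-tuples agree with those of $\Aut(\cA)$, hence definable in $\cA$ --- though note the paper is careful that $\Aut(\cB)\rest_P$ is only known to be dense in $\Aut(\cA)$, not equal to it.)
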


\begin{proof} We can assume that $\cA$ is relational with atomic relations $R_1, R_2, \ldots$ where $R_n$ has arity $l(n)$.
We can also assume that every definable relation in $\Ac$ is equivalent to an atomic formula and that $l(n) \leq n$ for all $n\geq 1$. In particular,  $\Ac$ has quantifier elimination and is homogeneous. Let $L$ be the language consisting of the relation symbols $P$, $Q$, $\lambda$ and $\rho$ (all 1-ary), $H$ (2-ary), and $S$ (4-ary), and let $L^+$ be the union of $L$ and the language of $\cA$. Let $T$ be the theory in $L^+$ which says:\begin{itemize}
\item If $R_n(\bar x)$ for some $n\geq 1$, then all entries of $\bar x$ satisfy $P$;
\item $Q(x)$ if and only if $\neg P(x)$;
\item if $\lambda(x)$ or $\rho(x)$, then $Q(x)$;
\item if $H(x,y)$, then $Q(x)$ and $Q(y)$;
\item if $S(x,y,a,b)$ then $Q(x)$, $Q(y)$, and $P(a)$, $P(b)$.
\end{itemize}

Let $\cM$ be a model of $T$. Then we say a set of elements of $\cM$ is an \textit{$n$-pair} if it can be written as 
 $\{a_1,\ldots, a_{l(n)}, c_1, \ldots, c_n\}$, where $n\geq 1$ and
\begin{itemize}
\item $P(a_i)$ holds for all $1 \leq i \leq l(n)$ and $Q(c_i)$ holds for all $1 \leq i \leq n$;
\item the elements $c_i$ are distinct and $H(c_i,c_j)$ holds iff $j \equiv i+1 \mod n$;
\item $\lambda(c_i)$ holds iff $i = 1$ and $\rho(c_i)$ holds iff $i = l(n)$;
\item $S(c_h,c_i,a_k,a_m)$ holds iff $a_h = a_k$.
\end{itemize}
Note that if an $n$-pair  $\{a_1,\ldots, a_{l(n)}, c_1, \ldots, c_n\}$ is given, we can uniquely recover the sequence $(c_1,\ldots,c_n)$ and also the sequence $(a_1,\ldots,a_{l(n)})$, which may contain repetitions. We say the $n$-pair \textit{labels} the sequence $\bar a = (a_1,\ldots,a_{l(n)})$.

Consider the class of finite models $\cB'$ of $T$ such that 
\begin{itemize}
\item the restriction of $\cB'$ to $P$ and to the relations $R_n$ is isomorphic to a finite substructure of $\cA$;
\item for every $n\geq 1$, if $\cB'$ contains an $n$-pair which labels the sequence $\bar a$, then $B \models R_n(\bar a)$.
\end{itemize}
By~\cite[Theorem 7.4.8]{Hodges}, this is a Fra\"iss\'e class; let $\cB^+$ be its Fra\"iss\'e limit.

%We show that $\bC$ is a Fra\"iss\'e class. It is easily checked that $\bC$ has HP and JEP. For AP we can just consider the case, where $C,D_1,D_2 \in \bC$ with $D_1 = \{d_1\} \cup C$ and $D_2 = \{d_2\} \cup C$. If either $d_1$ or $d_2$ satisfy $\neg P(x)$, we can just take the union of $D_1$ and $D_2$ as amalgam without adding any new relations. If both satisfy $P(x)$ we first take the amalgam of the restrictions of  $D_1$ and $D_2$ to $P$, which is possible because of the homogeneity of $\cA$. Then we expand this structure into an amalgam of $D_1$ and $D_2$ without adding new elements and by arranging that $S(x,y,d_1,d_2)$ never holds. This ensures that there are no new $n$-pairs.

Clearly, the restriction of $\cB^+$ to the subset $P$ and to the relations $R_n$ is homogeneous and has the same age as $\Ac$. Therefore it is isomorphic to $\Ac$. Let $\cB$ be the reduct of $\cB^+$ in the language $L$. By construction $\cB^+\models R_n(\bar a)$ holds if and only if some $n$-pair in $\cB^+$ labels $\bar a$. Therefore every relation $R_n$ is definable in $\cB$.
\end{proof}

In Proposition \ref{prop:finite_1}, the definable relations of $\cB$ restricted to $P$ are exactly the definable relations of $\cA$. Hence the orbits of $\Aut(\cB)$ and the orbits of $\Aut(\cA)$ on tuples in $P$ coincide. However we do not know if the restriction of $\Aut(\cB)$ to $P$ is closed in the full group $\Aut(\cA)$, i.e. it might be a proper dense subgroup of $\Aut(\cA)$. 
%We remark that, since $\cB^+$ is homogeneous, we can extend every isomorphism between finite substructures of its restriction to $P$ and relations $R_n$ to an automorphism of $\cB^+$. Since this restriction is isomorphic to $\cA$, the orbits of $\Aut(\cB)$ and the orbits of $\Aut(\cA)$ on tuples in $P$ coincide.

\begin{lem}
Let $\cA$ be a countable $\omega$-categorical homogeneous structure and $\cB$ as constructed in Proposition \ref{prop:finite_1}. Then $\End(\cB) = \overline{\Aut(\cB)}$, i.e., $\cB$ is a \emph{model-complete core} (cf. \cite{manuel-core}).
\end{lem}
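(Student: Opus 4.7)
The strategy is to establish the two inclusions $\End(\cB) \subseteq \Emb(\cB)$ (the \emph{core} property) and $\Emb(\cB) \subseteq \overline{\Aut(\cB)}$ (\emph{model-completeness}), which together with the trivial inclusion $\overline{\Aut(\cB)} \subseteq \End(\cB)$ yield the stated equality.

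For model-completeness, I would reduce to the homogeneity of $\cB^+$. First observe that $\Aut(\cB) = \Aut(\cB^+)$, since each relation $R_n$ is existentially $L$-definable in $\cB$ via the existence of an $n$-pair labeling, so every automorphism of $\cB$ preserves $R_n$. An $L$-embedding of $\cB$ preserves $R_n$ in the forward direction via the same $\exists$-definition. For reflection of $R_n$ (which is needed to upgrade an $L$-embedding of $\cB$ to an $L^+$-embedding of $\cB^+$), I would exploit the Fraïssé extension property: given an $n$-pair in $\cB$ witnessing $R_n(f(\bar a))$ whose cycle components need not lie in the image of $f$, one amalgamates over the finite trace of $f$ inside $\cB^+$ to produce an $n$-pair witnessing $R_n(\bar a)$. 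Once the $L$-embeddings of $\cB$ have been identified with the $L^+$-embeddings of $\cB^+$, homogeneity of the Fraïssé limit $\cB^+$ yields $\Emb(\cB^+) \subseteq \overline{\Aut(\cB^+)} = \overline{\Aut(\cB)}$, hence $\Emb(\cB) \subseteq \overline{\Aut(\cB)}$.

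For the core property, the plan is to show every endomorphism $e$ of $\cB$ is an $L$-embedding. Preservation of $\neg P$ and $\neg Q$ is automatic from the complementarity axiom of $T$, which forces $e$ to send $P$ to $P$ and $Q$ to $Q$ setwise. For the remaining atomic relations $\lambda$, $\rho$, $H$, $S$ and for injectivity, one exploits the rigidity of the $n$-pair construction: the $H$-cycle structure, the uniquely placed markers $\lambda$ and $\rho$, and the labeling encoded by $S$ together constrain $e$. An endomorphism that failed to reflect one of these atomic negations, or that collapsed two distinct elements, would, once the domain is enlarged to include suitable witnessing $n$-pairs via the Fraïssé extension property, yield a finite image structure violating the defining constraints of the Fraïssé class of $\cB^+$, contradicting the fact that $e$ maps into $\cB$.

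The main obstacle is the core-property argument. The bare $L$-axioms of $T$ do not immediately forbid collapses, so the argument must invoke the finer information carried by the Fraïssé class, namely that $n$-pairs label $R_n$-tuples consistently and that each $H$-cycle of length $n$ is rigidly labeled by a fixed $l(n)$-sequence of $P$-elements. Pinning down this combinatorial rigidity requires a careful case analysis tracking the interaction between the $P$-side (carrying the structure of $\cA$) and the $Q$-side (carrying the $n$-pair cycles), and it is where the specific features of Hrushovski's encoding are essential.
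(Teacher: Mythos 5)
Your decomposition into ``core'' ($\End(\cB)\subseteq\Emb(\cB)$) plus ``model-complete'' ($\Emb(\cB)\subseteq\overline{\Aut(\cB)}$) is a legitimate alternative to what the paper does, and your second half is essentially sound, though it can be streamlined: since every definable relation of $\cA$ is assumed atomic, the complement of $R_n$ on $P$-tuples is itself some atomic $R_m$, so an $L$-embedding reflects $R_n$ simply because it preserves the positive witness (an $m$-pair) for $R_m$ --- no amalgamation over the trace of $f$ is needed (and amalgamation as you describe it produces an $n$-pair over the image, which you cannot pull back along a non-surjective map). The paper instead proceeds syntactically: it invokes the cited characterization that, for $\omega$-categorical $\cB$, one has $\End(\cB)=\overline{\Aut(\cB)}$ iff every first-order formula is equivalent over $\cB$ to an existential positive one, reduces via homogeneity of $\cB^+$ to quantifier-free $L^+$-formulas, and observes that each $R_n$ --- and, crucially, each $\neg R_n$, being again atomic --- is existentially positively $L$-definable through the $n$-pair encoding.

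The genuine gap is in your core half, which you yourself flag as ``the main obstacle'' but then only gesture at. The proposed mechanism --- that a collapse, or a failure to reflect an atomic negation, would ``yield a finite image structure violating the defining constraints of the Fra\"iss\'e class'' --- does not hold up as stated. An endomorphism of $\cB$ preserves only the \emph{positive} part of the $n$-pair conditions; the defining clauses of an $n$-pair are biconditionals (distinctness of the $c_i$, $\neg H$ off the cycle, $\neg\lambda$, $\neg\rho$, the $S$-clause), and the Fra\"iss\'e class places no restriction on configurations in $Q$ that fail to be genuine $n$-pairs. Hence the image of an $n$-pair under a putative non-injective or non-reflecting endomorphism is merely a degenerate configuration that need not contradict membership in the age, and your case analysis has nothing to push against. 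What actually rescues the argument is the same observation as above: because the language of $\cA$ is closed under definability, all the negative information you need on the $P$-part ($\neq$, $\neg R_n$) is carried by \emph{positive} witnesses ($m$-pairs for the complementary atomic relations), which endomorphisms do preserve. Without importing that idea --- that is, without essentially reproving the existential-positive definability that drives the paper's proof --- your core step does not go through.
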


\begin{proof}
It is shown in \cite{RandomMinOps} that for $\omega$-categorical structure $\cB$, $\End(\cB) = \overline{\Aut(\cB)}$ holds if and only if every formula in $\cB$ is equivalent to an existential positive formula. %To show the model-completeness of $\cB$ we have to show that every formula in $\cB$ is equivalent to an existential formula.
Let $\cB^+$ be as in the proof of Proposition \ref{prop:finite_1}. Because of the homogeneity of $\cB^+$, every $L$-formula in $\cB$ is equivalent to a quantifier-free $L^+$-formula in $\cB^+$. So it suffices to show that every quantifier free $L^+$-formula is equivalent to an existential positive $L$-formula in $\cB^+$. We first prove the statement for an atomic formula $R_n(x_1,\ldots,x_{l(n)})$. By the construction of $\cB^+$ we have
\[ \cB^+ \models R_n(x_1,\ldots,x_{l(n)}) \Leftrightarrow \cB^+ \models \exists y_1, \ldots, y_n  \ ( \{ x_1,\ldots,x_{l(n)},  y_1, \ldots, y_n \} \text{ is an $n$-pair})\; . \]
The latter is an existential positive $L$-formula, since the definition of an $n$-pair did not require quantifiers or negations. For a general quantifier-free formula in $\cB^+$ we can assume that the relations $(R_n)_{n \geq 1}$ only appear in positive form, since we introduced a relation symbol for every definable relation in $\cA$. Applying the equivalence above for every such $R_n$ then gives us an existential positive formula in $L$.
\end{proof}

From now on, let $\cA$ be the canonical structure of the oligomorphic permutation group $\Sigma_{\G/\F}$, i.e., the structure on the domain of $\Sigma_{\G/\F}$ containing all relations which are invariant under $\Sigma_{\G/\F}$. Let $\cB$ and $\cB^+$ be as in the proof of Proposition \ref{prop:finite_1}. Set $\tilde\Sigma := \Aut(\cB)$, and let $\tilde\mu\colon \tilde\Sigma \to \G/\F$ be the composition of the restriction of $\tilde\Sigma$ to $P$ and the homomorphism $\mu_{\G/\F}$.

%We want to recall some facts about $\cA$. Let $\phi(\bar x, \bar y)$ be a formula in the language of $\cA$, where $\bar x$ and $\bar y$ are of the same length. Then $\phi$ gives us an equivalence relation on the set $\{\bar a : \cA \models \phi(\bar a, \bar a) \}$. The structure $\cA$ is determined (up to interdefinability) by such equivalence formulas $\phi(\bar x, \bar y)$ that only have finitely many equivalence classes. Let $\cA^*$ be the expansion of $\cA$ with relations $P_{\phi}^j$ for the equivalence classes of $\phi(\bar x, \bar y)$. As shown in the proof of Proposition \ref{prop:encoding_1}, $\cA^*$ is homogeneous, and its automorphism group $\Phi$ is the intersection of all open subgroups of $\Sigma_{G/F}$ of finite index.

Recall the construction of $\Sigma_{\G/\F}$ in Proposition \ref{prop:encoding_1}. 
Let $\cA^*$ be, as in the proof of that proposition, the structure $(A,(P_i^n)_{1\leq i\leq n})$. Recall that $\cA^*$ is $\omega$-categorical and homogeneous, and that all relations of $\cA$ are definable in $\cA^*$. By $\cB^*$ we denote the expansion of $\cB^+$ with the relations $(P_i^n)_{1\leq i\leq n}$ on its $P$-part. Let $\tilde\Phi$ be the automorphism group of $\cB^*$.

%By construction $\cB^+$ contains a symbol $E_\phi(\bar x, \bar y)$ for every  equivalence formula $\phi$ on $P$ with finitely many equivalence classes. Let $\cB^*$ be the expansion of $\cB^+$ with the additional relations $P_{\phi}^j$ for all the equivalence classes of $E_\phi(\bar x, \bar y)$ for every $i \geq 1$.

\begin{lem} \label{lem:finite_2} The map $\tilde\mu\colon \tilde\Sigma \to \G/\F$ is a continuous surjective homomorphism with kernel $\tilde\Phi$. Furthermore, $\tilde\Phi$ is the intersection of the open subgroups of finite index in $\tilde\Sigma$.
\end{lem}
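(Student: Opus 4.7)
The plan is to verify each claim in turn, using the Fra\"iss\'e construction of $\cB^+$ from the proof of Proposition~\ref{prop:finite_1} together with the properties of $\mu_{\G/\F}$ from Proposition~\ref{prop:encoding_1}. First, I would observe that $\tilde\mu$ is well-defined: by Proposition~\ref{prop:finite_1} the definable relations of $\cA$ are exactly the definable relations of $\cB$ restricted to $P$, so $g\rest_P \in \Aut(\cA) = \Sigma_{\G/\F}$ for every $g \in \tilde\Sigma$, and $\tilde\mu$ is then simply the composition of the restriction map with $\mu_{\G/\F}$. Continuity of $\tilde\mu$ follows from the continuity of the restriction map (pointwise stabilizers of finite subsets of $P$ in $\tilde\Sigma$ are open) combined with the continuity of $\mu_{\G/\F}$ established in Proposition~\ref{prop:encoding_1}.

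For surjectivity, given $r \in \G/\F$ I would pick $s \in \Sigma_{\G/\F}$ with $\mu_{\G/\F}(s) = r$. Since any finite partial isomorphism of $\cA$ extends to an automorphism of $\cB^+$ by the Fra\"iss\'e property, and $\Aut(\cB^+) = \Aut(\cB) = \tilde\Sigma$ (because every $R_n$ is existential positive $L$-definable in $\cB^+$), a standard back-and-forth argument produces $g \in \tilde\Sigma$ with $g\rest_P \cdot s^{-1} \in \Phi$, and hence $\tilde\mu(g) = r$. The kernel computation is then immediate: $g \in \ker\tilde\mu$ iff $g\rest_P \in \Phi = \Aut(\cA^*)$, iff $g\rest_P$ preserves each partition class $P_i^n$, iff $g \in \Aut(\cB^*) = \tilde\Phi$.

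For the last statement, one inclusion is routine: since $\G/\F$ is a profinite Hausdorff group, the identity has a neighborhood base of open normal subgroups of finite index with trivial intersection, and their $\tilde\mu$-preimages form open subgroups of finite index in $\tilde\Sigma$ whose intersection is exactly $\tilde\Phi$. The reverse inclusion, that $\tilde\Phi$ has no proper open subgroup of finite index, is the main technical point and where I expect the principal obstacle. I would repeat the orbit-counting argument from the proof of Proposition~\ref{prop:encoding_1} inside $\tilde\Phi$: assuming a proper open $\Lambda \leq \tilde\Phi$ of finite index with $\tilde\Phi_{(\bar y)} \subseteq \Lambda$ for some finite tuple $\bar y$, I would produce a tuple $\bar a$ in the $\Lambda$-orbit of $\bar y$ with entries disjoint from $\bar y$, a tuple $\bar b$ lying in the $\tilde\Phi$-orbit of $\bar y$ but outside the $\Lambda$-orbit, and a tuple $\bar d$ such that $(\bar y, \bar a)$, $(\bar d, \bar a)$, and $(\bar d, \bar b)$ all lie in the same $\tilde\Phi$-orbit, yielding the same contradiction as before.

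The crux of this final step is that $\cB^*$ must admit a Fra\"iss\'e-type extension property strong enough to place fresh $Q$-tuples disjoint from previously chosen ones while prescribing arbitrary partition classes on $P$-coordinates. This reduces to checking that the partition relations $P_i^n$ added to $\cB^+$ on the $P$-part are compatible with the $n$-pair amalgamation that governs the $Q$-part of $\cB^+$: the partition constrains only $P$-elements, whereas the $R_n$ are determined by $n$-pairs independently, so $\cB^*$ is a homogeneous $\omega$-categorical structure whose amalgamation essentially inherits from the Fra\"iss\'e class underlying $\cB^+$. Once this compatibility is established, the orbit-counting contradiction goes through verbatim.
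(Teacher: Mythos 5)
Your overall architecture matches the paper's proof: continuity as a composition of continuous maps, surjectivity by homogeneity of $\cB^+$ plus back-and-forth, identification of the kernel with $\tilde\Phi$ via the action on the classes $P_i^n$, the easy inclusion via open finite-index subgroups above $\tilde\Phi$ (your use of preimages of open normal subgroups of $\G/\F$ is interchangeable with the paper's stabilizers of the sets $X_n$), and the hard inclusion by rerunning the orbit-counting argument of Proposition~\ref{prop:encoding_1}. Up to that point the proposal is fine.

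The gap is in the final step, and you have slightly misdiagnosed where the difficulty sits. You frame the crux as the compatibility of the partition relations $P_i^n$ with the $n$-pair structure, and assert that since ``the partition constrains only $P$-elements, whereas the $R_n$ are determined by $n$-pairs independently,'' the amalgamation is inherited and the argument ``goes through verbatim.'' But the constraint defining the age of $\cB^+$ runs in the dangerous direction: every $n$-pair present in a finite substructure \emph{forces} $R_n$ to hold of the tuple it labels. The tuple $\bar d$ you must realize splits as $(\bar d_P,\bar d_{\neg P})$; the $P$-part is supplied by the argument of Proposition~\ref{prop:encoding_1} inside $\cA^*$, but the fresh $Q$-elements $\bar d_{\neg P}$ must be equipped with $\lambda,\rho,H,S$ so that $(\bar d,\bar a)$ and $(\bar d,\bar b)$ copy the quantifier-free type of $(\bar y,\bar a)$, \emph{and} so that no spurious $n$-pair arises among the $Q$-elements of $\bar d$, $\bar a$, $\bar b$ labelling a $P$-tuple $\bar x$ with $\neg R_n(\bar x)$ --- otherwise the configuration is simply not in the age of $\cB^*$ and no such $\bar d$ exists in $\cB^*$. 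Homogeneity of $\cB^*$ does not help you here; you need an explicit construction. The paper handles this by additionally stipulating that $S$ holds only of quadruples lying entirely inside $(\bar d,\bar a)$ or entirely inside $(\bar d,\bar b)$, so that any $n$-pair meeting $\bar d$ is confined to one of these two blocks and hence transfers back, via the copied type, to an $n$-pair inside $(\bar y,\bar a)$ --- which is legal because $(\bar y,\bar a)$ already lies in the age. Without this (or an equivalent) device, your claim that the contradiction ``goes through verbatim'' is unjustified, and this is precisely the one point of the lemma that does not reduce to Proposition~\ref{prop:encoding_1}.
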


\begin{proof} As a composition of continuous homomorphisms, $\tilde\mu$ is a continuous homomorphism. As in Proposition \ref{prop:encoding_1} we can think about $\tilde\mu$ as an action of the elements of $\tilde \Sigma$ on the set $\bigcup_{n \geq 1} X_n$, where $X_n = \{P_1^n, \ldots, P_n^n \}$ for all $n\geq 1$. The functions in $\tilde \Phi$ are exactly those elements who stabilize all $P_i^n$ pointwise, so $\tilde \Phi$ is indeed the kernel of $\tilde\mu$. Using the homogeneity of $\cB^+$ and a back-and-forth argument as in Proposition~\ref{prop:encoding_1} one can show that $\tilde\mu$ is surjective. 

Note that the age of $\cB^*$ consists exactly of those structures whose restriction to $P$ lies in the age of $\cA^*$ and whose reduct to the language $L^+$ lies in the age of $\cB^+$. With this in mind it is easy to verify that $\cB^*$ satisfies the extension property. Hence also $\cB^*$ is homogeneous.
%Note that the age of $\cB^*$ consists exactly of those structures whose restriction to $P$ lies in the age of $\cA^*$ and whose reduct to the language $L^+$ lies in the age of $\cB^+$.

The subgroup of $\tilde\Sigma$ consisting of the elements that stabilize $X_n$ pointwise for a fixed $n\geq 1$ is open and of finite index. The intersection of all such subgroups is equal to $\tilde\Phi$. Hence the intersection of all open subgroups of finite index in $\tilde\Sigma$ is contained in $\tilde\Phi$.

It remains to show that also the other inclusion holds; we follow the proof of Proposition~\ref{prop:encoding_1}. Assume that $\tilde\Phi$ has a proper open subgroup $\tilde\Lambda$ of finite index. Because of the openness of $\tilde\Lambda$, there is a tuple $\bar y$ such that the stabilizer $\tilde\Phi_{(\bar y)}$ lies in $\tilde\Lambda$. Let $O_{\tilde\Phi}(\bar y)$ and $O_{\tilde\Lambda}(\bar y)$ denote the orbits of $\bar y$ under $\tilde \Phi$ and $\tilde \Lambda$, respectively. We will obtain a contradiction by studying the action of $\tilde\Phi$ on the partition of $O_{\tilde\Phi}(\bar y)$ into blocks $gO_{\tilde\Lambda}(\bar y)$ with $g \in \tilde \Phi$. The index $|\tilde\Phi : \tilde\Lambda|$ coincides with the number of partition classes $gO_{\tilde\Lambda}(\bar y)$ in $O_{\tilde\Phi}(\bar y)$.

Choose a tuple $\bar a \in O_{\tilde\Lambda}(\bar y)$ and a tuple $\bar b$ from another partition class such that the entries of $(\bar y, \bar a, \bar b)$ are pairwise disjoint. We claim that there is a $\bar d \in O_{\tilde\Phi}(\bar y)$ such that $(\bar y, \bar a)$, $(\bar d, \bar a)$ and $(\bar d, \bar b)$ lie in the same orbit of $\tilde\Phi$, which is a contradiction.

By the homogeneity of $\cB^*$ two tuples lie in the same orbit of $\tilde\Phi$ if they satisfy the same relations in $\cB^*$. We write $\bar y = (\bar y_P, \bar y_{\neg P})$, where the components of $\bar y_P$ satisfy $P$, and the components of $\bar y_{\neg P}$ do not satisfy $P$. Similarly, we write $\bar a = (\bar a_P, \bar a_{\neg P})$, $\bar b = (\bar b_P, \bar b_{\neg P})$. By the proof of Proposition \ref{prop:encoding_1}, we can find a tuple $\bar d_P$ of elements of  $\cA^*$ such that $(\bar y_P, \bar a_P)$, $(\bar d_P, \bar b_P)$ and $(\bar d_P, \bar a_P)$ satisfy the same relations. 

We wish to find a tuple $\bar d_{\neg P}$ of the same length as $\bar y_{\neg P}$ such that setting $\bar d:= (\bar d_P, \bar d_{\neg P})$ we have that $(\bar d, \bar a)$ and $(\bar d, \bar b)$ lie in the same orbit as $(\bar y, \bar a)$. To this end, let $\bar d_{\neg P}$ be a tuple of new variables of the right length. We endow the set of elements appearing in $\bar y, \bar a, \bar b$ and $\bar d$ with relations $\rho$, $\lambda$, $H$ and $S$ such that we obtain a structure in the age of $\cB^*$, and such that $(\bar d, \bar a)$ and $(\bar d, \bar b)$ satisfy the same relations as $(\bar y, \bar a)$; clearly, we can then realize these variables as elements of $\cB^*$ and are done by homogeneity. When doing so we can also ensure that all quadruples of elements from $\bar a$, $\bar b$, and $\bar d$ for which $S$ holds consist entirely of elements of $(\bar d, \bar a)$ or of $(\bar d, \bar b)$.

We claim that the resulting structure lies in the age of $\cB^*$. Assume otherwise. Then the reduct of the structure in $L^+$ contains a $n$-pair that labels a tuple $\bar x$ with $\neg R_n(\bar x)$. This $n$-pair has to contain elements of $\bar d$, otherwise this would be a contradiction to the fact that the union of the elements of $\bar y$, $\bar a$ and $\bar b$ induces a structure in the age of $\cB^*$. Moreover, this $n$-pair lies entirely in $(\bar d, \bar a)$ or $(\bar d, \bar b)$, since $S$ does not hold for any other tuples containing elements of $\bar d$. But then, by construction, also the union of $\bar y$ and $\bar a$ contains a $n$-pair that labels an $\bar x'$ with $\neg R_n(\bar x')$. This contradicts the fact that the union of the elements of $\bar a$ and $\bar y$ lies in the age of $\cB^*$. This proves our claim.

Therefore there are functions $h_1,h_2 \in \tilde\Phi$ such that $h_1(\bar y, \bar a) = (\bar d, \bar a)$ and $h_2(\bar d, \bar a) = (\bar d, \bar b)$. Since $\Phi$ preserves our partition, $h_1(\bar y, \bar a) = (\bar d, \bar a)$ implies that $\bar d$ lies in $O_{\Lambda}(\bar y)$. But because of $h_2(\bar d ,\bar a) = (\bar d, \bar b)$ also $\bar b$ lies in the very same class, which is a contradiction.
\end{proof}

We are now ready to conclude this section with the proof of Theorem~\ref{thm:finitelanguage}.

\begin{proof}[Proof of Theorem~\ref{thm:finitelanguage}]
In Lemma \ref{lem:finite_2} we have shown that $\tilde\mu: \tilde\Sigma \to \G/\F$ is a surjective continuous homomorphism whose kernel $\tilde\Phi$ is the intersection of open subgroups with finite index in $\tilde\Sigma = \Aut(\cB)$. Let $B$ be the domain of $\cB$. We proceed as in Section~\ref{sect:lift1}: Via $\tilde\mu$ we can define an action of $\tilde\Sigma$ on $B \cup \G/\G_0$. This action is not continuous and has a non-open image, let $\tilde\Gamma$ be its closure in $\Sym(B \cup \G/\G_0)$. Then, following the exact same proof steps as in Lemma~\ref{lem:lifting_2} and Corollary \ref{cor:groups} we see that $\tilde\Gamma$ and $\tilde\Sigma \times \F$ are isomorphic, but not topologically isomorphic. By the same arguments as in Section~\ref{sect:monoids} one can also prove that $\overline{\tilde\Gamma}$ and $\overline{\tilde\Sigma} \times \F$ are isomorphic as abstract monoids, but not topologically isomorphic.

Since $\F$ is finite and $\cB$ has finite signature, there is a structure $\cC$ with finite signature such that $\End(\cC) \cong_T \End(\cB) \times \F$. Then $\Aut(\cC)$ is topologically isomorphic to $\Aut(\cB) \times \F = \tilde\Sigma \times \F$, which we know does not have reconstruction. By the model completeness of $\cB$, we know that its automorphism group is dense in its endomorphism monoid. It follows that $\End(\cC) \cong_T \End(\cB) \times \F = \overline{\tilde\Sigma} \times \F$,
proving that also the endomorphism monoid of $\cC$ has no reconstruction. Finally, by including the relation $R(x,y,a,b) \leftrightarrow x=y \lor a=b$ in $\cC$ one can ensure that the polymorphism clone of $\cC$ consists of those functions arising from endomorphisms of $\cC$ by adding dummy variables. By Proposition \ref{prop:extending2clones}, $\Pol(\cC)$ and the function clone generated by $\tilde\Gamma$ are isomorphic, but not topologically isomorphic.
\end{proof}

We do not know whether $\tilde\Gamma$  can be represented as automorphism group of a structure with finite relational signature. Similarly, we do not know whether its closure $\overline{\tilde\Gamma}$ as a monoid is the endomorphism monoid of a structure with finite relational signature. 

\section{Open Problems}
\label{sect:open}

Because of the comments on the consistency of
reconstruction for groups in Section~\ref{sect:profinite}, the following question is of central importance for the
reconstruction of structures from their endomorphism monoid. 

\begin{quest}
Let $\Sigma$ be a closed oligomorphic subgroup of $\Sym(\omega)$ which has reconstruction. Does the monoid obtained as the closure of $\Sigma$ in $\omega^\omega$ have reconstruction?
\end{quest}

A positive answer would imply that it is consistent 
with ZF+DC that all monoids with a dense set of units have reconstruction. These monoids play a central role
in the study of polymorphism clones of $\omega$-categorical structures, in particular for the study of the computational
complexity of constraint satisfaction problems (we refer to~\cite{Topo-Birk, BartoPinskerDichotomy} for details).

In the course of the proof, we encountered natural questions that we had to leave open (for example at the beginning of Section~\ref{sect:lift1}). An answer to the following question will
most probably shed some light on them. 

\begin{quest}
Let $\Gamma$ be a closed oligomorphic permutation group without reconstruction. Does the monoid closure of
$\Gamma$ also fail to have reconstruction?
\end{quest}

Lascar showed in~\cite{Lascar-demigroupe} that if $\mathcal A$ and $\mathcal B$ are countable $\omega$-categorical structures which are \emph{$G$-finite}, then any isomorphism between their endomorphism monoids is a topological isomorphism when restricted to their automorphism groups. An early version of that article concluded with the question whether the assumption of $G$-finiteness could be dropped; the published version does not contain the question anymore. We remark that our example would be a counterexample to that question.

\bibliographystyle{alpha}
\bibliography{cloneiso.bib}

\end{document}